\newtheorem{theorem}{Theorem}[section]
\newtheorem*{theorem*}{Theorem}
\newtheorem{lemma}[theorem]{Lemma}
\newtheorem{corollary}[theorem]{Corollary}
\newtheorem{claim}[theorem]{Claim}
\newtheorem{proposition}[theorem]{Proposition}
\newtheorem{maintheorem}{Theorem}
\theoremstyle{definition}
\newtheorem{definition}[theorem]{Definition}
\newtheorem{example}{Example}
\newtheorem*{example*}{Example}
\newtheorem{remark}[theorem]{Remark}
\newtheorem*{remark*}{Remark}
\newcommand{\eqdef}{\stackrel{\scriptscriptstyle\rm def}{=}}
\def\cA{\mathscr A}
\def\cB{\mathscr{B}}
\def\s{{\rm s}}
\def\u{{\rm u}}
\def\H{{\rm H}}
\DeclareMathOperator{\DD}{DD}
\DeclareMathOperator{\hD}{hD}
\DeclareMathOperator{\diam}{diam}
\DeclareMathOperator{\supp}{supp}
\def\bC{\mathbb{C}}
\def\bH{\mathbb{H}}
\def\bN{\mathbb{N}}
\def\bS{\mathbb{S}}
\def\bT{\mathbb{T}}
\def\bZ{\mathbb{Z}}
\def\bR{\mathbb{R}}
\def\cC{\mathscr{C}}
\def\cF{\mathcal{F}}
\def\cR{\mathcal{R}}
\def\cH{\mathcal{H}}
\def\cO{\EuScript{O}}
\def\cR{\mathscr{R}}
\def\cH{\mathscr{H}}
\def\cU{\EuScript{U}}
\def\cW{\mathscr{W}}
\def\cM{\EuScript{M}}
\DeclareMathSymbol{\varnothing}{\mathord}{AMSb}{"3F}
\author[S. Campos]{Sara Campos}\address{Department of Mathematics, Federal University of Juiz de Fora, Campus Universit\'ario - Bairro Martelos, Juiz de Fora  36036-900, MG, Brazil}\email{sara.campos@edu.ufjf.br}
\author[K. Gelfert]{Katrin Gelfert}\address{Institute of Mathematics, Federal University of Rio de Janeiro, Av. Athos da Silveira Ramos 149, Cidade Universit\'aria - Ilha do Fund\~ao, Rio de Janeiro 21945-909, RJ, Brazil}\email{gelfert@im.ufrj.br}
\begin{document}

\title[Exceptional sets]{Exceptional sets\\for nonuniformly hyperbolic diffeomorphisms}

\begin{abstract}
For a surface diffeomorphism, a compact invariant locally maximal set $W$ and some subset $A\subset W$ we study the $A$-exceptional set, that is, the set of points whose orbits do not accumulate at $A$.  We show that if the Hausdorff dimension of $A$ is smaller than the Hausdorff dimension $d$ of some ergodic hyperbolic measure, then the topological entropy of the exceptional set is at least the entropy of this measure and its Hausdorff dimension is at least $d$. Particular consequences occur when there is some \emph{a priori} defined hyperbolic structure on $W$ and, for example, if there exists an SRB measure.
\end{abstract}

\begin{thanks}{This research has been supported by CNPq research grant 302880/2015-1 (Brazil). KG thanks M.~Rams and Ch.~Wolf for comments on measures of maximal dimension and T.~J\"ager on Proposition~\ref{pro:Manning}. SC thanks A.~Arbieto for helpful discussions.}\end{thanks}
\keywords{topological entropy, Hausdorff dimension, exceptional sets}
\subjclass[2000]{%
37B40, 
37C45, 
37D25, 
37F35, 
}
\maketitle
\tableofcontents
\section{Introduction}

The study of orbits of hyperbolic torus automorphisms is a very classical field. Any linear automorphism given by a $n\times n$-integer matrix of determinant $\pm1$ and without eigenvalues of absolute value $1$ induces a hyperbolic  automorphism of the torus $\bT^n=\bR^n/\bZ^n$ and provides the simplest example of an Anosov diffeomorphism. One of their most important features is their ergodicity (with respect to the Haar measure) which implies in particular that almost all points have a dense orbit. Nevertheless, the complementary often called \emph{exceptional set}, that is, the set of points with non-dense orbit, can in general be quite large. 

In this paper we are interested in the ``size" of exceptional sets in terms of their topological entropy and Hausdorff dimension. We will study limit exceptional sets for surface diffeomorphisms.

Let us  introduce some notation. Given a  metric space $(X,d)$ and a continuous transformation $f\colon X\to X$, we denote by $\cO^+_{f|X}(x)\eqdef\{f^k(x)\colon k\in\bN\cup\{ 0 \} \}$ the (\emph{forward}) \emph{semi-orbit} of $x\in X$  by $f$. 
We say that a set $Y\subset X$ is  \emph{forward $f$-invariant} if $f(Y)\subset Y$ and \emph{$f$-invariant} if $f(Y)=Y$. Denote by $\omega_f(x)$ the \emph{(forward) $\omega$-limit set} of a point $x\in X$, that is, the set of limit points of $\cO^+_{f|X}(x)$. Denote by $\overline Y$ the \emph{closure} of a set $Y\subset X$.

\begin{definition}[Exceptional set]
Given a set $A\subset X$, the \emph{(forward) $A$-exceptional set}  (with respect to $f$) is defined by
\[
	E^+_{f|X}(A)
	\eqdef \{x\in X\colon \overline{\cO^+_{f|X}(x)}\cap A=\emptyset\}
\]
and the \emph{(forward) limit $A$-exceptional set} (with respect to $f$)  is defined by  
\[
	I^+_{f|X}(A)
	\eqdef \{x\in X\colon \omega_f(x)\cap A=\emptyset\}.
\]
\end{definition}

\begin{remark}\label{remarkIE}
Note that $E^+_{f|X}(A)\subset I^+_{f|X}(A)$ and that $I^+_{f|X}(A)$ is $f$-invariant while $E^+_{f|X}(A)$ is forward $f$-invariant. 
Observe also that 
\[
	I^+_{f|W}(A) = E^+_{f|W}(A) \cup\bigcup_{n\ge0}f^{-n}( \tilde{A}),
	\quad\text{ where }\quad
	\tilde{A} \eqdef \{a\in A\colon \omega_f(a)\cap A = \emptyset\}.
\]	
 Note that this union is disjoint. 
\end{remark}

\subsection{Main results}

 Unless otherwise stated, in this paper $f\colon M\to M$ will be always  a $C^{1+\varepsilon}$ diffeomorphism of a Riemannian surface $M$ and $W\subset M$ some compact $f$-invariant locally maximal set. This includes the possibilities of either a hyperbolic ergodic measure whose support is a locally maximal set (see Theorem~\ref{the:smallsmall}), a basic set of a surface diffeomorphism (see Theorem~\ref{the:basic}), or  an Anosov surface diffeomorphism (see Theorem~\ref{the:anosov}). 
Recall that a set $W\subset M$ is \emph{locally maximal} (or \emph{isolated}) if there exists a neighborhood $U$ of $W$ such that
\[
	W
	= \bigcap_{k\in\bZ}f^k(U).
\]

We denote by $\dim_\H( B)$ the \emph{Hausdorff dimension}  of a set $B\subset M$ (see~\cite{Fal:03}) and by $h(f|_W,B)$ the \emph{topological entropy} of $f|_W$ on $B\subset W$ (we briefly recall their definitions in Sections~\ref{sec:dis} and~\ref{sec:entropies}, respectively). 

The following is our first main result.

\begin{maintheorem}\label{teo:mainentropy}
	Let $f\colon M\to M$ be a $C^{1+\varepsilon}$ diffeomorphism of a compact Riemannian surface. Let $W\subset M$ be a compact $f$-invariant locally maximal set. 
	
	For every $A\subset W$ such that $h(f|_W,A)<h(f|_{W})$, we have
\[
	h(f|_{W},I^+_{f|W}(A))
	= h(f|_{W},E^+_{f|W}(A))
   	= h(f|_{W}).
\]	 
\end{maintheorem}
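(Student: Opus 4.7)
Since $E^+_{f|W}(A)\subset I^+_{f|W}(A)\subset W$ by Remark~\ref{remarkIE}, the upper bound is automatic from monotonicity of the topological entropy on subsets, and I would concentrate entirely on the lower bound
\[
   h(f|_W,E^+_{f|W}(A))\ge h(f|_W),
\]
assuming without loss of generality that $h(f|_W)>0$ (the hypothesis is vacuous otherwise). My plan is to approximate $h(f|_W)$ from below by the topological entropy of a hyperbolic horseshoe sitting inside $W$, and then to excise symbolically those orbits of the horseshoe that visit $A$, producing a compact $f$-invariant subset of $W$ disjoint from $A$ and still carrying entropy close to $h(f|_W)$. Any such compact invariant subset automatically lies inside $E^+_{f|W}(A)$, because a forward orbit whose closure is contained in a compact set disjoint from $A$ cannot accumulate in $A$.

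\medskip
\noindent\textbf{Step 1: a horseshoe of near-maximal entropy.} Fix $\eta>0$ small. I would first invoke the variational principle to produce an ergodic $f$-invariant probability $\mu$ supported in $W$ with $h_\mu(f)>h(f|_W)-\eta$. Because $M$ is a surface and $h_\mu(f)>0$, Ruelle's inequality applied to both $f$ and $f^{-1}$ forces $\mu$ to have one strictly positive and one strictly negative Lyapunov exponent, so $\mu$ is hyperbolic. Katok's horseshoe theorem then furnishes a basic set $\Lambda_\eta\subset W$ (local maximality of $W$ keeps the horseshoe inside $W$ once the shadowing neighborhood around $\supp\mu$ is taken small enough), together with a H\"older conjugacy $\pi\colon(\Sigma,\sigma)\to(\Lambda_\eta,f|_{\Lambda_\eta})$ to a two-sided subshift of finite type, such that
\[
   h:=h_{\rm top}(f|_{\Lambda_\eta})>h_\mu(f)-\eta>h(f|_W)-2\eta.
\]
Choosing $\eta$ small gives $h>a+\delta$, where $a:=h(f|_W,A)$ and $\delta>0$.

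\medskip
\noindent\textbf{Step 2: symbolic pruning and conclusion.} Let $\mathcal{C}_N$ denote the admissible length-$N$ words of $\Sigma$, of cardinality growing like $e^{Nh}$, and let $\mathcal{B}_N\subset\mathcal{C}_N$ be those words whose Markov cylinder in $\Lambda_\eta$ meets $A$. Since on the horseshoe Markov cylinders of depth $N$ are uniformly comparable to Bowen $(N,\varepsilon)$-balls, the assumption $h(f|_W,A)=a<h$ should yield, for every $\eta'>0$ and some $N$ sufficiently large, the single-scale bound $\#\mathcal{B}_N\le e^{N(a+\eta')}$. I would then define the $N$-step subshift
\[
   \Sigma^*:=\{(x_i)_{i\in\bZ}\in\Sigma\colon(x_i,\ldots,x_{i+N-1})\notin\mathcal{B}_N\text{ for every }i\in\bZ\},
\]
and set $\Lambda^*:=\pi(\Sigma^*)$. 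By construction $\Lambda^*$ is compact, $f$-invariant, and disjoint from $A$. A standard subshift counting argument (only $e^{N(a+\eta')}$ words are forbidden out of $e^{Nh}$, with $a+\eta'<h$) will give $h_{\rm top}(\sigma|_{\Sigma^*})\ge h-\rho(N)$ with $\rho(N)\to 0$ as $N\to\infty$. Since $\Lambda^*\subset E^+_{f|W}(A)$, this yields
\[
   h(f|_W,E^+_{f|W}(A))\ge h(f|_W)-2\eta-\rho(N),
\]
and letting $N\to\infty$ and then $\eta\to 0$ would finish the proof.

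\medskip
\noindent\textbf{Main obstacle.} The delicate point is the uniform single-scale cylinder count in Step~2. The hypothesis $h(f|_W,A)<h$ is formulated via the Bowen/Pesin--Pitskel topological entropy, a Carath\'eodory dimension permitting Bowen balls of \emph{varying} depth in a cover, whereas the pruning argument requires a bound on the number of Markov cylinders of a \emph{single} depth $N$ meeting $A$. In general the Bowen/Pesin--Pitskel entropy only upper-bounds, and does not coincide with, the upper box-counting entropy on non-compact subsets. On the uniformly hyperbolic horseshoe I would expect to bridge this gap via a Moran-type or topological-pressure argument---selecting $N$ along a subsequence where the Carath\'eodory infimum is realised at a uniform scale, or refining the Markov partition so as to convert a varying-depth cover into a single fine-scale cover. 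All other ingredients---the variational construction of $\mu$, the Katok horseshoe, the counting estimate for the pruned subshift, and the identification $\Lambda^*\subset E^+_{f|W}(A)$---are standard.
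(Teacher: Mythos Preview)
Your overall strategy---produce a Katok horseshoe $\Lambda_\eta\subset W$ of near-maximal entropy, then excise the part of it touching $A$---matches the paper's approach in Step~1, but the pruning in Step~2 has a genuine gap that you correctly flag and then underestimate. The single-scale bound $\#\mathcal{B}_N\le e^{N(a+\eta')}$ simply does not follow from $h(f|_W,A)=a$, and neither of your proposed fixes can rescue it. Take $A$ to be a countable dense subset of the horseshoe $\Lambda_\eta$: by countable stability of Bowen entropy we have $h(f|_W,A)=0$, yet $A$ meets \emph{every} Markov cylinder of \emph{every} depth, so $\mathcal{B}_N=\mathcal{C}_N$ for all $N$ and your pruned subshift $\Sigma^*$ is empty. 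There is no subsequence of $N$'s along which the Carath\'eodory infimum is realised at a uniform scale, and refining the Markov partition does not help since the density of $A$ persists at every scale. The point is that the Bowen--Pesin--Pitskel entropy is genuinely a dimension-type quantity: an efficient cover of $A$ must be allowed to use arbitrarily deep cylinders on different pieces of $A$ (in the countable case, one covers the $i$th point by a cylinder of depth $n_i\to\infty$).

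The paper handles exactly this by working with variable-depth families throughout. After passing to the one-sided shift via the conjugacy and the natural projection $\pi^+$, it invokes Dolgopyat's result (Theorem~\ref{pro1:Dol}, resting on Proposition~\ref{pro:Dol}): given any family $\mathcal{U}$ of words of lengths $\ge n_0$ with $\sum_\ell e^{-s|U_\ell|}<1$ for some $s<h(\sigma^+)$, the set of one-sided sequences containing no word of $\mathcal{U}$ has entropy at least $H(n_0)\to h(\sigma^+)$. This matches the Carath\'eodory definition of entropy precisely and is what makes the argument go through. The passage from $I^+$ to $E^+$ is then done via the decomposition in Remark~\ref{remarkIE} and countable stability, rather than by producing a compact invariant subset of $E^+$ directly. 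If you want to keep your Step~2 architecture, the correct replacement is to forbid a \emph{variable-length} family of words coming from a Bowen cover of $A\cap\Lambda_\eta$, and then to appeal to (or reprove) Proposition~\ref{pro:Dol}.
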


To state our second main result, denote by $\cM=\cM(f|_W)$ the space of all $f$-invariant Borel probability measures supported on $W$ and by $\cM_{\rm erg}\subset\cM$ the subset of all ergodic measures. 
Given $\mu\in\cM$, define the \emph{Hausdorff dimension} of $\mu$ by
\[
	\dim_\H\mu
	\eqdef \inf\{\dim_\H( B)\colon B\subset M\text{ and }\mu(B)=1\}.
\]
We denote by $h_\mu(f)$ the \emph{entropy} of $\mu$. Note that (since we consider a surface diffeomorphism $f$) every ergodic measure $\mu$ with positive entropy is hyperbolic (we recall \emph{hyperbolicity} in Section~\ref{sec:horse}). 

\begin{maintheorem}\label{the:smallsmall}
	Let $f\colon M\to M$ be a $C^{1+\varepsilon}$ diffeomorphism of a compact Riemannian surface and let $\mu\in\cM$ be a hyperbolic $f$-invariant ergodic measure whose support $W\eqdef\supp\mu$ is locally maximal. 

	For every $A\subset W$ such that $\dim_\H( A)<\dim_\H\mu$, we have
\[
	h(f|_W,I^+_{f|W}(A))
	\ge h_\mu(f)
	\quad\text{ and }\quad
	\dim_\H( E^+_{f|W}(A))
	\ge \dim_\H\mu.
\]	
\end{maintheorem}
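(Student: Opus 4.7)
The strategy is to deduce Theorem~\ref{the:smallsmall} from Theorem~A by approximating $\mu$ from inside $W$ by basic sets that capture both its entropy and its Hausdorff dimension. The main step is to show that for every $\delta>0$ there exists a basic set $\Lambda=\Lambda(\delta)\subset W$, whose Lyapunov exponents are $\delta$-close to those of $\mu$, such that
\[
h(f|_\Lambda)>h_\mu(f)-\delta
\quad\text{and}\quad
\dim_\H\Lambda>\dim_\H\mu-\delta.
\]
The first bound is Katok's horseshoe theorem for hyperbolic ergodic measures of $C^{1+\varepsilon}$ surface diffeomorphisms. The second, and more delicate, bound combines it with Young's dimension formula, valid in dimension two,
\[
\dim_\H\mu=h_\mu(f)\Bigl(\frac{1}{\chi^u(\mu)}+\frac{1}{|\chi^s(\mu)|}\Bigr),
\]
together with the Manning--McCluskey identity $\dim_\H\Lambda=t^u+t^s$ obtained from Bowen's equation in the one-dimensional (hence automatically conformal) stable and unstable directions of $\Lambda$. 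This simultaneous horseshoe approximation in entropy, exponents, and dimension is the main technical obstacle of the proof, and appears to be encapsulated in Proposition~\ref{pro:Manning} acknowledged in the introduction.

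Given such a $\Lambda$, I fix $\delta$ so small that $\dim_\H A+2\delta<\dim_\H\mu$, so that $\dim_\H(A\cap\Lambda)\le\dim_\H A<\dim_\H\Lambda-\delta$. On the conformal basic set $\Lambda$, the Bowen--Manning--McCluskey correspondence between Hausdorff dimension on stable and unstable slices and topological pressure of the geometric potentials converts this dimension gap into an entropy gap $h(f|_\Lambda,A\cap\Lambda)<h(f|_\Lambda)$. Applying Theorem~A to $f|_\Lambda$ and $A\cap\Lambda$ then yields
\[
h(f|_\Lambda,I^+_{f|\Lambda}(A\cap\Lambda))=h(f|_\Lambda,E^+_{f|\Lambda}(A\cap\Lambda))=h(f|_\Lambda).
\]
Since $I^+_{f|\Lambda}(A\cap\Lambda)\subset I^+_{f|W}(A)$ and topological entropy is monotone under inclusion, $h(f|_W,I^+_{f|W}(A))\ge h(f|_\Lambda)>h_\mu(f)-\delta$, and letting $\delta\to 0$ proves the entropy statement.

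For the dimension statement I would construct, inside $E^+_{f|\Lambda}(A\cap\Lambda)$, a Markov subsystem of $\Lambda$ of nearly maximal Hausdorff dimension. Concretely, fix a fine enough Markov partition of $\Lambda$, delete those rectangles that meet a small neighborhood of $A\cap\Lambda$, and pass to a topologically mixing subshift of finite type on the remaining alphabet. Because $\dim_\H(A\cap\Lambda)<\dim_\H\Lambda$, the Bowen-type pressure equations for the stable and unstable geometric potentials on this subsystem approach their counterparts on $\Lambda$ as the Markov partition is refined, so the subsystem supports an equilibrium measure whose Hausdorff dimension, by Young's formula on the horseshoe, is at least $\dim_\H\Lambda-\delta\ge\dim_\H\mu-2\delta$. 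Its support lies in $E^+_{f|W}(A)$, hence $\dim_\H E^+_{f|W}(A)\ge\dim_\H\mu-2\delta$, and $\delta\to 0$ concludes.
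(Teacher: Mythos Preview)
Your step~2 contains a genuine error: the implication ``$\dim_\H(A\cap\Lambda)<\dim_\H\Lambda$ $\Rightarrow$ $h(f|_\Lambda,A\cap\Lambda)<h(f|_\Lambda)$'' is false, even on an almost-homogeneous horseshoe. Take $A=\Lambda\cap\cW^\u_{\rm loc}(x,f)$ for any $x\in\Lambda$. Then $\dim_\H A=d^\u(\Lambda)<d^\u(\Lambda)+d^\s(\Lambda)=\dim_\H\Lambda$, yet by Proposition~\ref{pro:Manning} (applied to the $\s$-saturated invariant set $B=\Lambda$) one has $h(f|_\Lambda,A)=h(f|_\Lambda)$. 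The Manning--McCluskey formulas relate pressure to the dimension of the \emph{full} basic set, not of arbitrary subsets, so there is no ``dimension gap $\Rightarrow$ entropy gap'' conversion available here. Consequently Theorem~A cannot be invoked, and your entropy argument collapses. (You also misidentify Proposition~\ref{pro:Manning}: it is the unstable-slice entropy identity, not the horseshoe approximation, which is Lemma~\ref{lemaqeaprox1}.)

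The paper circumvents this obstacle via Proposition~\ref{pro:diment}: rather than bounding $h(f|_\Lambda,A\cap\Lambda)$, it covers $A$ by Markov rectangles of varying levels using Moran covers in \emph{both} the stable and unstable directions, translates the hypothesis $\dim_\H A<\dim_\H\mu$ (via Young's formula and the near-constancy of exponents on the $(\mu,\varepsilon)$-horseshoe) into the summability condition $\sum_i e^{-s\lvert U_i\rvert}<1$ for some $s<h(f|_\Lambda)$ on the associated one-sided cylinders, and then applies Dolgopyat's Proposition~\ref{pro:Dol} directly to conclude that the exceptional set has full entropy. Your step~4 has a related problem: if $A$ is dense in $\Lambda$ (e.g.\ a countable dense set of dimension zero), deleting all Markov rectangles meeting a fixed neighborhood of $A$ leaves nothing. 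The paper's dimension argument instead exploits that $I^+_{f|\Lambda}(A\cap\Lambda)$ is $\s$-saturated (Lemma~\ref{lem:locally}), passes to an unstable slice via Proposition~\ref{pro:Manning}, converts full entropy on the slice to near-full unstable dimension via Proposition~\ref{pro:localdim}, and then recovers the stable dimension $d^\s(\Lambda)$ by Marstrand slicing (Lemma~\ref{lem:Marstrand}).
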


\begin{remark}
	Note that it may happen that the Hausdorff dimension of a measure is smaller than the one of its support. Indeed, Example~\ref{ex:2} in Section~\ref{sec:hyp} provides such a case and shows that the second inequality in Theorem~\ref{the:smallsmall} can be strict (see also Theorem~\ref{the:anosov} below).
\end{remark}

\begin{remark}
Note that the hypotheses that $\mu$ is hyperbolic and that $\dim_\H\mu>0$ (which by Young's formula~\eqref{eq:Young} is equivalent to $h_\mu(f)>0$) in Theorem~\ref{the:smallsmall} automatically exclude that $\mu$ is supported on a hyperbolic periodic orbit. Note that the hypothesis $\mu$ being hyperbolic is necessary for the  conclusion of Theorem~\ref{the:smallsmall}. Indeed, if $f\colon\bT^2\to\bT^2$ is a minimal diffeomorphism such that the Haar measure $\mu$ is $f$-ergodic, then for any $A=\{x\}$, $x\in\bT^2$, the (limit) $A$-exceptional set is empty and that $\dim_\H\mu=2$.
\end{remark}

To state our third main result, we define the {\it dynamical dimension} of $f|_{ W}$  by   
\begin{equation}\label{def:DD}
	\DD(f|_W) \eqdef  \sup_\mu\dim_\H \mu,
\end{equation}
where the supremum is taken over all ergodic measures $\mu\in\cM_{\rm erg}$  with positive entropy. In Section~\ref{sec:hyp} we discuss some of its properties and  provide examples where $\DD(f|_W)<\dim_\H( W)$. Recall that for any such measure  by Young's formula~\cite{You:82}  
\begin{equation}\label{eq:Young}
	\dim_\H \mu
	= h_\mu(f)\Big(\frac{1}{\chi^\u(\mu)}-\frac{1}{\chi^\s(\mu)}\Big)	,
\end{equation}
where  $\chi^\s(\mu)<0<\chi^\u(\mu)$ denote the Lyapunov exponents of $\mu$ (see~\cite{KatHas:95} for definition and details and Section~\ref{sec:horse}). In particular,  if the topological entropy of $f|_{W}$ is positive then $\DD(f|_{W})>0$. 

\begin{maintheorem}\label{main}
	Let $f\colon M\to M$ be a $C^{1+\varepsilon}$ diffeomorphism of a compact Riemannian surface. Let $W\subset M$ be a compact $f$-invariant locally maximal set. 
	
	For every $A\subset W$ such that $\dim_\H( A)<\DD(f|_{ W})$, we have 
\[
	\dim_\H( E^+_{f|W}(A) )
	\ge \DD(f|_W).
\]
\end{maintheorem}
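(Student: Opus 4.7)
The plan is to reduce Theorem~\ref{main} to Theorem~\ref{the:smallsmall} by approximating $\DD(f|_W)$ from below by dimensions of ergodic measures that are supported on \emph{basic sets} contained in $W$. Fix $\eta>0$ small. By definition of $\DD(f|_W)$ there is an ergodic hyperbolic measure $\mu\in\cM_{\rm erg}$ of positive entropy with $\dim_\H\mu>\DD(f|_W)-\eta$. The obstacle to applying Theorem~\ref{the:smallsmall} directly to $\mu$ is that $\supp\mu$ need not be locally maximal in $M$.

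To bypass this, I invoke a Katok-type horseshoe approximation: for every $\delta>0$ there exist a topologically mixing basic set $\Lambda\subset W$ and an ergodic measure $\nu$ of full support on $\Lambda$ (for instance, a Bernoulli measure pulled back from the symbolic coding, or the measure of maximal entropy of the mixing horseshoe) whose entropy and Lyapunov exponents satisfy $|h_\nu(f)-h_\mu(f)|<\delta$ and $|\chi^{\u/\s}(\nu)-\chi^{\u/\s}(\mu)|<\delta$. The containment $\Lambda\subset W$ comes from the local maximality of $W$ once the horseshoe is built inside an isolating neighborhood of $W$ (it is $f$-invariant and therefore trapped inside $\bigcap_k f^k(U)=W$). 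Young's formula~\eqref{eq:Young} applied to both $\mu$ and $\nu$, together with continuity of the map $(h,\chi^\u,\chi^\s)\mapsto h(1/\chi^\u-1/\chi^\s)$ away from zero exponents, then yields $\dim_\H\nu>\dim_\H\mu-\eta$ provided $\delta$ is sufficiently small. In particular
\[
	\dim_\H\nu>\DD(f|_W)-2\eta>\dim_\H(A)\ge\dim_\H(A\cap\Lambda).
\]

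Since $\Lambda$ is a basic set it is locally maximal in $M$ and $\supp\nu=\Lambda$, so Theorem~\ref{the:smallsmall} applies to the triple $(f,\Lambda,\nu)$ with exceptional subset $A\cap\Lambda$, producing
\[
	\dim_\H E^+_{f|\Lambda}(A\cap\Lambda)\ge\dim_\H\nu.
\]
For any $x\in\Lambda$ the $W$- and $\Lambda$-semi-orbits coincide and stay inside $\Lambda$, so
\[
	\overline{\cO^+_{f|W}(x)}\cap A=\overline{\cO^+_{f|\Lambda}(x)}\cap(A\cap\Lambda),
\]
whence $E^+_{f|\Lambda}(A\cap\Lambda)\subset E^+_{f|W}(A)$. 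Combining these inclusions gives $\dim_\H E^+_{f|W}(A)\ge\dim_\H\nu>\DD(f|_W)-2\eta$, and sending $\eta\to 0$ completes the proof.

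The main obstacle is the horseshoe approximation step itself: one needs \emph{simultaneous} control of the entropy and of both Lyapunov exponents (so that Young's dimension formula transfers from $\mu$ to $\nu$), together with full support of the approximating measure on the horseshoe. Such a strengthening of Katok's theorem in the $C^{1+\varepsilon}$ surface setting is known but technical, and the paper presumably develops or cites the precise version that is invoked.
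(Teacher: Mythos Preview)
Your proof is correct and follows essentially the same route as the paper: approximate $\DD(f|_W)$ by the Hausdorff dimension of a hyperbolic ergodic measure and invoke Theorem~\ref{the:smallsmall} on a Katok horseshoe contained in $W$ (this is exactly the content of Lemma~\ref{lemaqeaprox}). The paper applies Theorem~\ref{the:smallsmall} directly to the approximating measures $\mu_n$ on $W$ (its proof only uses $\supp\mu\subset W$ with $W$ locally maximal, not $W=\supp\mu$), whereas you manufacture a fully supported $\nu$ on the horseshoe to match the stated hypothesis literally and work directly with $E^+$ rather than first passing through $I^+$ and Remark~\ref{remarkIE}; both variants are valid and the underlying argument is the same.
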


\subsection{Previous results on exceptional sets and related topics}

The interest in exceptional sets has many origins and it started with the work of Jarnik-Besicovitch~\cite{Jar:29}: Recall that a real number $x$ is \emph{badly approximable} if there is a positive constant $c=c(x)$ such that for any reduced rational $p/q$ we have $\lvert p/q-x\rvert > c/q^2$. Looking from an algebraic point of view, Jarnik's theorem states that the Hausdorff dimension of  the set of badly approximable numbers in the unit interval is $1$. 

In view of our main results it is worth mentioning the following point of view of Jarnik's theorem and its generalizations. Namely it can be equivalently read in terms of bounded geodesic curves emanating from a fixed point of the surface $M=\bH^2/SL(2,\bZ)$, where $SL(2,\bZ)$ denotes the group of $2\times 2$ matrices with integer entries and where one projects the Poincar\'e metric of the hyperbolic plane $\bH^2$ to its quotient. A geodesic on $\bH^2$ is bounded if and only if its end points in $\bS^1=\partial\bH^2$ are badly approximable. Thus, one can conclude that the set of directions such that the corresponding geodesic is bounded has Hausdorff dimension $1$.  
This result was generalized to complete noncompact manifolds of negative curvature and finite volume (see, for example \cite{Dan:89}) and to  many more general contexts yielding the same type of result that the set of directions with bounded geodesics has full Hausdorff dimension, that is, has Hausdorff dimension equal to the one of the subset of recurrent directions (those whose forward and backward geodesic rays intersect infinitely often some compact region, respectively).

From a slightly different point of view, Hirsch~\cite{Hir:70}  suggested to exhibit general properties which are common for all compact invariant sets of a hyperbolic torus automorphism. According to~\cite[p.~134]{Hir:70}, Smale showed that for an automorphism of $\bT^2$ there is no nontrivial compact invariant one-dimensional set.
 In addressing these points, Franks~\cite{Fra:77} showed that, given any $C^2$ nonconstant  curve $\gamma\colon(a,b)\to\bT^n$ and a hyperbolic torus automorphism $f\colon\bT^n\to\bT^n$, the set $\gamma((a,b))$ contains a point whose orbit by $f$ is dense in $\bT^n$. Ma\~n\'e~\cite{Man:79} extended this result to rectifiable%
\footnote{Recall that a curve $\gamma\colon(a,b)\to\bT^n$ is \emph{rectifiable} if it is continuous and if there exists a constant $C>0$ satisfying $\sum_nd(\gamma(t_{n+1},\gamma(t_n))\le C$ for all partitions $a=t_0\le t_1\le\ldots\le t_{n+1}=b$.  It is interesting to observe that Hancock~\cite{Han:78} provides examples that show that Ma\~n\'e's result does not extend to continuous curves. Note that a rectifiable path $\gamma((a,b))$ has Hausdorff dimension $1$ and that a merely \emph{continuous} path can have Hausdorff dimension $>1$ (see, for example,~\cite[Chapter 11]{Fal:03}).} 
curves.
Thus, on one hand no nontrivial invariant set can be a manifold. On the other hand a $f$-exceptional set cannot contain any rectifiable path and the question about the fractal nature of such sets arises.  

Looking again at Jarnik's theorem, it is not difficult to see that a number $x\in[0,1)$ is badly approximable if and only if  the closure of the semi-orbit of $x$ under the Gauss map $f\colon[0,1)\to[0,1)$ (and hence the set of its limit points) does not contain the point $0$), that is, a number $x\in[0,1)$ is badly approximable if and only if $x$ is in the exceptional (and hence in the limit exceptional) set of $\{0\}$  (with respect to the Gauss map $f$).
These results motivate the general question about the Hausdorff dimension of the $A$-exceptional set for some ``sufficiently small'' set of points $A$.
 Abercrombie and Nair proved in~\cite{AN:interval} a version of Jarnik's theorem  for interval Markov maps. Dani investigated special countable subsets $A\subset\bT^n$ and showed that the $A$-exceptional set under a hyperbolic torus automorphism has full Hausdorff dimension $n$ (see~\cite[Corollary 2.7]{Dan:88}).%
\footnote{In fact, Dani in~\cite{Dan:88} and also in the before mentioned article~\cite{Dan:89} consideres a more general setting and obtained a stronger conclusion that such sets are \emph{winning} in the sense of Schmidt games.
See for example the original work by Schmidt who showed that any winning set has full Hausdorff dimension (see~\cite[Section 11]{Sch:66}). Though Schmidt games so far were mainly applied to questions of algebraic nature
(see, for example, the introduction of~\cite{Wu:16} for numerous references), more recently they were also used to investigate the fractal structure of exceptional sets (see~\cite{Tse:09,Wu:16}).}
 
In the context of expanding dynamical systems, exceptional sets were also studied for example by Urba\'nski~\cite{Urb:91} considering a $C^2$ expanding map of a Riemannian manifold $X$ and showing that for any $x\in X$ the forward $\{x\}$-exceptional set has full Hausdorff dimension equal to the dimension of $X$. Abercrombie and Nair~\cite{AbeNai:97} considered expanding rational maps of the Riemann sphere on its Julia set and the forward $A$-exceptional set of a \emph{finite} set of points $A$ and also established that this set has full Hausdorff dimension, that is, dimension equal to the Hausdorff dimension of the Julia set. Their approach is based on a construction of a  Borel measure supported on the set of points whose forward orbit misses certain neighborhoods of $A$ and the use of the mass distribution principle to determine dimension. 

Ideas similar to~\cite{Urb:91,AbeNai:97} were also used by Dolgopyat~\cite{Dol:97} where exceptional sets are studied in several contexts: a one-sided shift space (see Theorem~\ref{pro1:Dol}), piecewise uniformly expanding maps of the interval, Anosov surface diffeomorphisms (see Example~\ref{ex:2}),  conformal Anosov flows, and geodesic flows of Riemannian surfaces of negative curvature. In general terms he showed that for any set $A$ which is ``sufficiently small'' in the sense that it has  small topological entropy or small Hausdorff dimension compared to the one of the dynamical system, the $A$-exceptional set is ``large'' in the sense that it has  full entropy or full Hausdorff dimension, respectively. His proofs are also based on the construction of a certain Borel measure and applying the mass distribution principle; in all his classes of systems the possibility of symbolic representation of the dynamics facilitates the construction of such measures. 

To follow the approaches in~\cite{AbeNai:97,Dol:97} in a nonhyperbolic context is more difficult. In~\cite{CamGel:16}, the model case of rational maps of the Riemann sphere on its Julia sets was studied, including  the cases of maps with critical points or parabolic points and corresponding results were obtain. Here the condition ``sufficiently small" means that the Hausdorff dimension of $A$ has to be smaller than the \emph{dynamical dimension} of the system, that is, the maximal Hausdorff dimension of ergodic measures with positive entropy (compare~\eqref{def:DD}). The approach in~\cite{CamGel:16} is, instead of studying the dynamical systems on the whole, to consider appropriate sub-dynamical systems which are uniformly expanding and hence allow to apply the abstract result on shift spaces in~\cite{Dol:97} and gradually approximate ``from inside" the full dynamics (in the setting of this paper we  will proceed analogously, see Section~\ref{sec:horse}). 
In this paper we adapt approach in~\cite{CamGel:16} to our setting. 

\subsection{Improved results in specific cases}

To improve the lower bound in Theorem~\ref{main} in some specific cases, we require \emph{a priori} information about a hyperbolic structure on the whole set $W$. We are going to provide some examples.

We first recall some concepts (see~\cite{KatHas:95}). 
Given a diffeomorphism $g\colon M\to M$ and a compact invariant set $\Gamma\subset M$, we say that $\Gamma$ is \emph{hyperbolic} if (up to a change of metric) there exist a  $dg$-invariant splitting $E^\s\oplus E^\u=T_\Gamma M$ and numbers $0<\mu<1<\kappa$ such that for every $x\in\Gamma$ we have
\[
	\lVert dg_{/E^\s_x}\rVert\le \mu<1<
	\kappa\le \lVert dg_{/E^\u_x}\rVert.
\]
We say that $g\colon M\to M$ is \emph{Anosov} if $M$ is hyperbolic. 
Recall that a set $\Gamma\subset M$ is \emph{basic} (with respect to $g$) if it is compact, invariant, locally maximal, and hyperbolic and  if $g|_\Gamma$ is topologically mixing  (see~\cite[Chapter 6.4]{KatHas:95} for more details).

In view of the definition of the dynamical dimension in~\eqref{def:DD} and Young's formula~\eqref{eq:Young}, the following result improves the lower bound provided in Theorem~\ref{main} in case that $W$ is a basic set.

\begin{maintheorem}\label{the:basic}
	Let $f\colon M\to M$ be a $C^{1+\varepsilon}$ diffeomorphism of a compact Riemannian surface. Let $W\subset M$ be a basic set (with respect to $f$) and let $\mu$ be a $f$-invariant ergodic measure supported on $W$.
	
	For every $A\subset W$ such that $\dim_\H( A)<\dim_\H\mu$, we have
\[
	\dim_\H( I^+_{f|W}(A))
	\ge d^\s(W) + \frac{h_\mu(f)}{\chi^\u(\mu)}
	\quad\text{ where }\quad
	d^\s(W)
	\eqdef \max_{\nu\in\cM_{\rm erg}(f|W)}\frac{h_\nu(f)}{\lvert\chi^\s(\nu)\rvert}
\]	
\end{maintheorem}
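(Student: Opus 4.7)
My plan is to refine the proof of Theorem~\ref{the:smallsmall}, upgrading the ``stable dimension'' of an approximating horseshoe to the maximal stable dimension $d^s(W)$ of the whole basic set. Two structural facts will do the heavy lifting. First, by the McCluskey-Manning dimension formula for basic sets of $C^{1+\varepsilon}$ surface diffeomorphisms, every local stable slice $W^s_{loc}(x)\cap W$ has Hausdorff dimension exactly $d^s(W)$. Second, the $\omega$-limit set is constant along stable manifolds, so $I^+_{f|W}(A)$ is saturated by local stable slices inside $W$. Therefore, once we produce an ``unstable'' exceptional Cantor set of dimension close to $h_\mu(f)/\chi^u(\mu)$, we may thicken it by the local stable slices of $W$ to gain the extra dimension $d^s(W)$ for free.

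Concretely, fix $\varepsilon>0$. By Katok's horseshoe approximation theorem, pick a basic set $\Lambda\subset W$ with $h_{\rm top}(f|_\Lambda)\geq h_\mu(f)-\varepsilon$ and with the unstable Lyapunov exponent of every ergodic measure supported on $\Lambda$ within $\varepsilon$ of $\chi^u(\mu)$; in particular the unstable dimension $d^u(\Lambda)$ exceeds $h_\mu(f)/\chi^u(\mu)-2\varepsilon$ and $\dim_\H\Lambda>\dim_\H A$. Fix a Markov rectangle $R$ meeting $\Lambda$ and a reference point $p\in\Lambda\cap R$; use the local product structure to coordinatize $W\cap R$ by the bracket $[\cdot,\cdot]\colon(W^u_{loc}(p)\cap W)\times(W^s_{loc}(p)\cap W)\to W\cap R$, which is bi-Lipschitz onto its image because the stable and unstable foliations are uniformly transverse $C^{1+\varepsilon}$ one-dimensional foliations. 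Now, using the two-sided symbolic model of $\Lambda$, run the exceptional-set construction underlying Theorem~\ref{main} (see Proposition~\ref{pro1:Dol} and~\cite{Dol:97,CamGel:16}) in the one-sided unstable factor: this yields $E_U\subset W^u_{loc}(p)\cap\Lambda$ with $\dim_\H E_U\ge d^u(\Lambda)-\varepsilon$ such that every $u\in E_U$ has its forward $f|_\Lambda$-orbit avoiding $A\cap\Lambda$, and hence, because orbits stay in $\Lambda$, avoiding $A$. So $E_U\subset E^+_{f|W}(A)\subset I^+_{f|W}(A)$.

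Next, thicken in the stable direction inside $W$. For every $u\in E_U$ the slice $W^s_{loc}(u)\cap W$ is contained in $I^+_{f|W}(A)$, by constancy of the $\omega$-limit along stable leaves, and has Hausdorff dimension equal to $d^s(W)$. Setting
\[
    \tilde E \eqdef \bigcup_{u\in E_U}\bigl(W^s_{loc}(u)\cap W\bigr)\subset I^+_{f|W}(A),
\]
the bracket coordinates identify $\tilde E$ with the product $E_U\times(W^s_{loc}(p)\cap W)$, and the standard product inequality (applicable here because the stable slice is dimension-regular, satisfying $\dim_\H=\overline{\dim}_B=d^s(W)$) gives
\[
    \dim_\H I^+_{f|W}(A)
    \ge \dim_\H\tilde E
    \ge d^s(W)+\dim_\H E_U
    \ge d^s(W)+\frac{h_\mu(f)}{\chi^u(\mu)}-3\varepsilon.
\]
Letting $\varepsilon\to 0$ concludes the argument.

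The main technical obstacle is the middle step: one must produce $E_U$ with Hausdorff dimension close to $d^u(\Lambda)$ while genuinely avoiding $A$, starting from the hypothesis $\dim_\H A<\dim_\H\mu$. One thus has to transfer $\dim_\H(A\cap\Lambda)<\dim_\H\Lambda$ into a symbolic statement about the smallness of the ``bad'' cylinders of $A\cap\Lambda$ in the one-sided unstable shift, after which Dolgopyat's method, as adapted in~\cite{CamGel:16}, yields a Cantor set in the unstable factor whose points have forward orbits (in $\Lambda$) avoiding those cylinders, with dimension close to $d^u(\Lambda)$. The remaining ingredients, namely the McCluskey-Manning formula for $\dim_\H(W^s_{loc}(x)\cap W)$, Katok's approximation by horseshoes, and the bi-Lipschitz behavior of the bracket on a basic set of a $C^{1+\varepsilon}$ surface diffeomorphism, are standard and need only be cited.
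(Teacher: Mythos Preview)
Your proposal is correct and follows essentially the same route as the paper: approximate $\mu$ by a $(\mu,\varepsilon)$-horseshoe $\Lambda\subset W$, produce an unstable-direction exceptional set of dimension close to $h_\mu(f)/\chi^\u(\mu)$, then exploit that $I^+_{f|W}(A)$ is $\s$-saturated in the \emph{ambient} basic set $W$ (not merely in $\Lambda$) to thicken by stable slices of dimension $d^\s(W)$, and combine via a product/slicing estimate. The only cosmetic differences are that the paper obtains the unstable dimension estimate by routing through entropy (Proposition~\ref{pro:diment}, then Proposition~\ref{pro:Manning}, then Proposition~\ref{pro:localdim}) rather than constructing $E_U$ directly, and that the paper invokes Marstrand's slicing lemma (Lemma~\ref{lem:Marstrand}) rather than the product inequality; note incidentally that the lower bound $\dim_\H(E\times F)\ge\dim_\H E+\dim_\H F$ holds without any regularity assumption on $F$, so your appeal to $\dim_\H=\overline\dim_B$ for the stable slice is unnecessary.
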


Recall that an ergodic $f$-invariant measure is  \emph{SRB} (with respect to $f$) if it has absolutely continuous conditional measures on unstable manifolds (see~\cite{You:02} for more details), we will denote it by $\mu_{\rm SRB}^+$. 
Recall that by Pesin's formula~\cite{Pes:77} we have 
\begin{equation}\label{pesfor}
	h_{\mu_{\rm SRB}^+}(f)
	= \chi^\u(\mu_{\rm SRB}^+).
\end{equation}
Theorem~\ref{the:basic} hence immediately implies the following.

\begin{corollary}
	Under the assumptions of Theorem~\ref{the:basic} and assume that there exists an SRB measure $\mu_{\rm SRB}^+\in\cM_{\rm erg}(W)$, for every $A\subset W$ such that $\dim_\H( A)<\dim_\H\mu_{\rm SRB}^+$ we have
\[
	\dim_\H( I^+_{f|W}(A))
	\ge d^\s(W) + 1.
\]	
\end{corollary}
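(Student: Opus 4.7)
The plan is to apply Theorem~\ref{the:basic} directly to $\mu = \mu_{\rm SRB}^+$ and then simplify the lower bound using Pesin's formula~\eqref{pesfor}. Since $W$ is a basic set (hence uniformly hyperbolic) and $\mu_{\rm SRB}^+$ is an ergodic measure supported on $W$, it fits the hypotheses of Theorem~\ref{the:basic} with $\chi^\s(\mu_{\rm SRB}^+)<0<\chi^\u(\mu_{\rm SRB}^+)$ well defined. The assumption $\dim_\H(A)<\dim_\H\mu_{\rm SRB}^+$ in the corollary is exactly the smallness hypothesis required to invoke Theorem~\ref{the:basic} with this choice of $\mu$.

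First I would invoke Theorem~\ref{the:basic} to obtain
\[
\dim_\H(I^+_{f|W}(A))\ \ge\ d^\s(W)\ +\ \frac{h_{\mu_{\rm SRB}^+}(f)}{\chi^\u(\mu_{\rm SRB}^+)}.
\]
Next, because $\mu_{\rm SRB}^+$ is an SRB measure (its conditional measures on unstable manifolds are absolutely continuous), Pesin's entropy formula~\eqref{pesfor} gives
\[
h_{\mu_{\rm SRB}^+}(f)\ =\ \chi^\u(\mu_{\rm SRB}^+),
\]
so the ratio in the bound above equals $1$. Substituting back yields the claimed inequality $\dim_\H(I^+_{f|W}(A))\ge d^\s(W)+1$.

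There is no genuine obstacle; the statement is essentially a bookkeeping step that extracts the sharpest form of Theorem~\ref{the:basic} when the ergodic measure at hand has the largest possible unstable dimension contribution (equal to $1$). The only things to verify are that $\mu_{\rm SRB}^+\in\cM_{\rm erg}(f|_W)$ makes sense as an input to Theorem~\ref{the:basic} (automatic, since it is assumed ergodic and supported on $W$) and that both Lyapunov exponents are nonzero and finite (automatic on a basic set, in particular guaranteeing $\chi^\u(\mu_{\rm SRB}^+)>0$ so that the quotient is defined). Once these are noted, the proof is a two-line deduction.
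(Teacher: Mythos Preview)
Your proof is correct and matches the paper's own reasoning exactly: the paper states that the corollary follows immediately from Theorem~\ref{the:basic} applied to $\mu=\mu_{\rm SRB}^+$ together with Pesin's formula~\eqref{pesfor}. There is nothing to add.
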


In case of an Anosov map of a surface $M$ and $W=M$, we can state a result slightly stronger than Theorem~\ref{the:basic} (note that in this case we only know $d^\s(M)\le 1$ in general).

\begin{maintheorem}\label{the:anosov}
	Let $f\colon M\to M$ be an Anosov $C^{1+\varepsilon}$ of a compact Riemannian surface and let $\mu$ be a $f$-invariant ergodic measure.
	
	For every $A\subset M$ such that $\dim_\H( A)<\dim_\H\mu$ we have
\[
	\dim_\H( I^+_{f|M}(A) )\ge 1+\frac{h_\mu(f)}{\chi^\u(\mu)}.
\]		
\end{maintheorem}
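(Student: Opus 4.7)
The plan is to strengthen the proof of Theorem~\ref{the:basic} by exploiting that in the Anosov surface setting the whole manifold $M$ is foliated by $C^{1+\varepsilon}$ stable curves, each of Hausdorff dimension exactly $1$. The central observation is that $I^+_{f|M}(A)$ is $W^\s$-saturated: if $y\in W^\s(x)$ then $\omega_f(y)=\omega_f(x)$, so $x\in I^+_{f|M}(A)$ if and only if $y\in I^+_{f|M}(A)$ (this is also why the statement concerns $I^+$ rather than $E^+$). Consequently the stable Cantor factor of dimension $d^\s(M)\le 1$ appearing in Theorem~\ref{the:basic} can be replaced by the genuine $1$-dimensional stable direction coming from entire stable leaves.

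The first step is to produce, inside $I^+_{f|M}(A)$, a Cantor set lying in a single local unstable manifold whose Hausdorff dimension is close to $h_\mu(f)/\chi^\u(\mu)$. Fix $\varepsilon>0$ with $\dim_\H(A)<\dim_\H\mu-\varepsilon$. Katok's horseshoe approximation of $\mu$ yields a basic set $\Lambda\subset M$ carrying an ergodic measure $\nu$ with $h_\nu(f)\ge h_\mu(f)-\varepsilon$ and $\chi^\u(\nu)\le\chi^\u(\mu)+\varepsilon$. Running on $\Lambda$ the Dolgopyat-type construction used to prove Theorem~\ref{main} (after checking $\dim_\H(A\cap\Lambda)<\dim_\H\nu$), one obtains a compact set
$$C\subset W^\u_{\mathrm{loc}}(p)\cap I^+_{f|M}(A)$$
in some local unstable manifold $W^\u_{\mathrm{loc}}(p)\subset\Lambda$, with $\dim_\H C\ge h_\nu(f)/\chi^\u(\nu)\ge h_\mu(f)/\chi^\u(\mu)-\varepsilon'$. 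By the $W^\s$-saturation of $I^+_{f|M}(A)$,
$$K\eqdef\bigcup_{x\in C}W^\s_{\mathrm{loc}}(x)\subset I^+_{f|M}(A).$$

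The final and most delicate step is to show $\dim_\H K\ge 1+\dim_\H C$. A naive Marstrand-type product argument in a local product chart would lose a H\"older exponent, because for $C^{1+\varepsilon}$ Anosov surface diffeomorphisms the local product map is merely H\"older in general. To avoid that loss I would fix a Frostman measure $\mu_C$ on $C$ of exponent $\dim_\H C-\varepsilon$ and define a measure $\tilde\mu$ on $K$ by
$$\tilde\mu(B)\eqdef\int_C\mathrm{Leb}_{W^\s_{\mathrm{loc}}(x)}\bigl(B\cap W^\s_{\mathrm{loc}}(x)\bigr)\,d\mu_C(x).$$
Each leaf $W^\s_{\mathrm{loc}}(x)$ is itself a $C^{1+\varepsilon}$ curve uniformly transverse to $W^\u_{\mathrm{loc}}(p)$, so its arc-length is genuinely $1$-Hausdorff-regular, while the $\mu_C$-integration contributes $\dim_\H C-\varepsilon$ to the local scaling. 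A direct ball estimate $\tilde\mu(B(z,r))\lesssim r^{1+\dim_\H C-\varepsilon}$ combined with the mass distribution principle then yields $\dim_\H K\ge 1+\dim_\H C-\varepsilon$; sending $\varepsilon,\varepsilon'\to 0$ completes the proof. The main obstacle is exactly this ball estimate: keeping the stable contribution equal to $1$ despite only H\"older regularity of the full stable foliation requires decomposing $B(z,r)$ leaf-by-leaf using the $C^{1+\varepsilon}$ smoothness of individual stable leaves, rather than invoking a global product chart.
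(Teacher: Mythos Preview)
Your strategy coincides with the paper's through the construction of the unstable Cantor set $C\subset\cW^\u_{\rm loc}(p,f)\cap I^+_{f|M}(A)$ and the $\s$-saturation step yielding $K=\bigcup_{x\in C}\cW^\s_{\rm loc}(x,f)\subset I^+_{f|M}(A)$. The divergence is only in the final product estimate, and it stems from a misconception: for a $C^{1+\varepsilon}$ Anosov surface diffeomorphism (indeed for any basic set of a surface diffeomorphism) the stable and unstable holonomies are \emph{Lipschitz}, not merely H\"older --- this is the bounded-distortion fact from~\cite{PalVia:88} already invoked in the proof of Theorem~\ref{the:smallsmall}. Consequently $M$ is locally bi-Lipschitz equivalent to the direct product $\cW^\u_{\rm loc}(p,f)\times\cW^\s_{\rm loc}(p,f)$, and the paper's (implicit) argument is simply to rerun the slicing step of Theorem~\ref{the:basic} via Lemma~\ref{lem:Marstrand}, now with $b_2=1$ (the Hausdorff dimension of a full local stable leaf) in place of $b_2=d^\s(\Gamma)$.

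Your Frostman--mass-distribution construction would also succeed once you feed Lipschitz holonomy into the ball estimate, but note that your proposed ``leaf-by-leaf'' workaround would not actually bypass a genuine H\"older obstruction if one existed: bounding $\mu_C(\{x\in C:\cW^\s_{\rm loc}(x,f)\cap B(z,r)\neq\emptyset\})$ is intrinsically a statement about the transverse regularity of the stable foliation (how the leaves through $B(z,r)$ project back to $\cW^\u_{\rm loc}(p,f)$), not about the $C^{1+\varepsilon}$ smoothness of individual leaves. Fortunately that obstruction is fictitious in the surface setting, so the paper's one-line application of Lemma~\ref{lem:Marstrand} suffices.
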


The following result is then an immediate consequence of Theorem~\ref{the:anosov}. Note that it generalizes~\cite[Theorem 3]{Dol:97} stated for an Anosov diffeomorphism of $\bT^2$  (see also Example~\ref{ex:2}). 

\begin{corollary}
	Let $f\colon M\to M$ be an Anosov $C^{1+\varepsilon}$ of a compact Riemannian surface. For every $A\subset M$ such that $\dim_\H( A)<\dim_\H\mu_{\rm SRB}^+$ we have
\[
	\dim_\H( I^+_{f|W}(A))
	= 2.
\]	
\end{corollary}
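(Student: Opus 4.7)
The plan is to deduce this corollary as an essentially immediate consequence of Theorem~\ref{the:anosov}, so the proof is short and mostly amounts to unpacking the quantities.

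First I would apply Theorem~\ref{the:anosov} directly with $\mu=\mu_{\rm SRB}^+$. The hypothesis $\dim_\H(A)<\dim_\H\mu_{\rm SRB}^+$ is precisely the hypothesis required by Theorem~\ref{the:anosov} (SRB measures for Anosov surface diffeomorphisms are automatically hyperbolic ergodic), so we obtain
\[
\dim_\H\bigl(I^+_{f|M}(A)\bigr)\ge 1+\frac{h_{\mu_{\rm SRB}^+}(f)}{\chi^\u(\mu_{\rm SRB}^+)}.
\]
Next, by Pesin's entropy formula~\eqref{pesfor}, for the SRB measure we have $h_{\mu_{\rm SRB}^+}(f)=\chi^\u(\mu_{\rm SRB}^+)$, so the quotient on the right-hand side equals $1$ and the lower bound becomes $2$.

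For the matching upper bound, since $M$ is a surface we trivially have $\dim_\H(I^+_{f|M}(A))\le\dim_\H(M)=2$. Combining these two inequalities yields $\dim_\H(I^+_{f|M}(A))=2$, as claimed.

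There is no real obstacle here, as all the substantive work is packaged inside Theorem~\ref{the:anosov}; the only ``step'' is to observe that the Pesin equality~\eqref{pesfor} collapses the ratio $h_\mu(f)/\chi^\u(\mu)$ to $1$ precisely for SRB measures, which is why the SRB case is the one that recovers the full dimension of the ambient surface.
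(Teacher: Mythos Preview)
Your proof is correct and follows exactly the route the paper intends: the corollary is stated as an immediate consequence of Theorem~\ref{the:anosov}, and you have spelled out precisely that deduction by applying the theorem to $\mu=\mu_{\rm SRB}^+$, invoking Pesin's formula~\eqref{pesfor} to collapse the ratio to $1$, and noting the trivial upper bound from $\dim_\H(M)=2$.
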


\subsection{Essential ingredients for our proofs and organization}

We explain briefly some of the main observations which are fundamental for our proofs.

For that recall first that for a compact invariant hyperbolic set $\Gamma\subset M$ the \emph{stable manifold} of $x\in\Gamma$ (with respect to $f$) is defined by
\[\begin{split}
	\cW^\s(x,f)
	&\eqdef \{y\in M\colon d(f^{n}(y),f^{n}(x))\to 0\text{ if }n\to\infty\}.
\end{split}\]
Note that it is an injectively immersed $C^1$ one-dimensional manifold tangent to $E^\s$ on $\Gamma$. 
The \emph{local stable manifold} of  $x\in \Gamma$ (with respect to $f$ and a neighborhood $U$ of $\Gamma$) is the set
\[
	\cW^\s_{\rm loc}(x,f)
	\eqdef \big\{ y\in \cW^\s(x,f) \colon f^{k}(y)\in U\text{ for every }k\ge 0\big\}.
\]
Note that there exists $\delta>0$ such that for every $x\in\Gamma$ the local stable manifold of $x$ contains a $C^1$ stable disk centered at $x$ of radius $\delta$. The  \emph{unstable manifold} at $x$, $\cW^\u(x,f)$, and the \emph{local unstable manifold} at $x$, $\cW^\u_{\rm loc}(x,f)$, are defined analogously considering $f^{-1}$ instead of $f$. 

 First, we make the  crucial observation in Lemma~\ref{lem:locally} (which is an immediate consequence of the definition of a  limit exceptional set) that locally a limit exceptional set with respect to the dynamics in some hyperbolic set (where local stable manifolds are well defined) is a union of subsets of stable manifolds.

\begin{definition}[$\s$-saturated]
Given a hyperbolic set $\Gamma\subset M$, we call a set $B\subset\Gamma$ \emph{$\s$-saturated} (with respect to $f|_\Gamma$) if for every $x\in B$ we have $\Gamma\cap\cW^\s_{\rm loc}(x,f)\subset B$. 
\end{definition}

\begin{lemma}\label{lem:locally}
	For every $A\subset W$ and every hyperbolic set $\Gamma\subset W$ the set $I^+_{f|\Gamma}(A\cap\Gamma)$ is $\s$-saturated and invariant (with respect to $f|_\Gamma$).
\end{lemma}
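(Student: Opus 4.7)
The statement splits cleanly into two independent claims, both of which follow directly from the observation that the $\omega$-limit set depends only on the asymptotic behavior of the orbit.

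My plan is first to handle invariance. Since $\Gamma$ is compact, invariant, and hyperbolic, the restriction $f|_\Gamma$ is a homeomorphism, so it suffices to check $\omega_f(x)=\omega_f(f(x))=\omega_f(f^{-1}(x))$ for $x\in\Gamma$. This is standard: discarding or prepending one iterate does not change the set of accumulation points of a forward orbit. Consequently, $x\in I^+_{f|\Gamma}(A\cap\Gamma)$ if and only if $f^{\pm1}(x)$ lies in this set, giving $f$-invariance.

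Next I handle $\s$-saturation. Let $x\in I^+_{f|\Gamma}(A\cap\Gamma)$ and let $y\in\Gamma\cap\cW^\s_{\rm loc}(x,f)$. By the very definition of the stable manifold, $d(f^n(x),f^n(y))\to 0$ as $n\to\infty$. A trivial triangle-inequality argument then shows that $(f^n(x))_n$ and $(f^n(y))_n$ have the same accumulation points in $M$, so $\omega_f(x)=\omega_f(y)$. Since $y\in\Gamma$ and $\omega_f(x)\cap(A\cap\Gamma)=\emptyset$ by assumption, we conclude $\omega_f(y)\cap(A\cap\Gamma)=\emptyset$, i.e.\ $y\in I^+_{f|\Gamma}(A\cap\Gamma)$. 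This is precisely the $\s$-saturation condition.

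There is no real obstacle here; the proof is essentially an unfolding of the definitions of $\omega$-limit set, local stable manifold, and $\s$-saturation. The only point that requires a line of justification is that $f|_\Gamma$ is a homeomorphism (needed to get full $f$-invariance, not just forward invariance), which follows from $\Gamma$ being a compact invariant set for the diffeomorphism $f$.
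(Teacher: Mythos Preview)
Your proof is correct and follows essentially the same approach as the paper's: both use that $d(f^n(x),f^n(y))\to0$ for $y\in\cW^\s_{\rm loc}(x,f)$ to compare $\omega$-limit sets, and both dispatch invariance via the elementary fact that shifting an orbit by one iterate does not change its accumulation points. The only cosmetic differences are that you prove $\omega_f(x)=\omega_f(y)$ whereas the paper records only the inclusion $\omega_f(y)\subset\omega_f(x)$ (which already suffices), and you are more explicit about $f|_\Gamma$ being a homeomorphism.
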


\begin{proof}
	Note that for every $y\in\Gamma\cap\cW^\s_{\rm loc}(x,f)$ we have $d(f^n(y),f^n(x))\to0$ and hence $\omega_f(y)\subset\omega_f(x)$. Thus, if $x\in I^+_{f|\Gamma}(A\cap \Gamma)$ then $y\in I^+_{f|\Gamma}(A\cap\Gamma)$. The invariance follows immediately from continuity of $f$.
\end{proof}

The second key  observation is that  entropy of some basic set is the same in any intersection with a local unstable manifold. It can be seen as version%
\footnote{Manning~\cite{Man:81} considers the Hausdorff dimension of the set $B=G_\mu$ of forward $\mu$-generic points for some ergodic hyperbolic measure $\mu$ and its unstable sections.}, and its proof is very similar to the one,  of~\cite[Theorem]{Man:81}. For completeness we will include it (see Section~\ref{sec:entropies}).

\begin{proposition}\label{pro:Manning}	
	Let $f\colon M\to M$ be a $C^1$ surface diffeomorphism with a basic set $\Gamma\subset M$.
	Let $B\subset\Gamma$ be a $\s$-saturated  invariant set.
	
	Then for every $x\in\Gamma$ we have 
\[
	h(f|_\Gamma,B\cap \cW^\u_{\rm loc}(x,f))
	= h(f|_\Gamma,B) .	
\]
\end{proposition}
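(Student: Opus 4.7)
\medskip

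\noindent\textbf{Proof proposal.} The upper bound $h(f|_\Gamma, B\cap \cW^\u_{\rm loc}(x,f))\le h(f|_\Gamma, B)$ is immediate from the monotonicity of Bowen's topological entropy of non-compact subsets. The content is in the reverse inequality, and the plan is to adapt Manning's argument for Hausdorff dimension (where uniform expansion of $f^{-1}$ along $\cW^\s$ is replaced by the observation that the stable direction carries no forward entropy).

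The first step is to fix a Markov partition $\mathcal{P}=\{R_1,\dots,R_m\}$ of $\Gamma$ of diameter smaller than an expansivity constant $\epsilon_0$, and for each $R_i$ pick a reference point $z_i\in R_i$ so that $R_i$ is identified, via the local product structure $[\cdot,\cdot]$, with $(\cW^\u_{\rm loc}(z_i)\cap R_i)\times(\cW^\s_{\rm loc}(z_i)\cap R_i)$. Since $B$ is $\s$-saturated, $B\cap R_i$ is a full product: if we set $B^\u_i\eqdef B\cap\cW^\u_{\rm loc}(z_i)\cap R_i$, then $B\cap R_i=\{[y_\u,y_\s]:y_\u\in B^\u_i,\ y_\s\in\cW^\s_{\rm loc}(z_i)\cap R_i\}$. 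The forward refinement $\mathcal{P}_n^+=\bigvee_{k=0}^{n-1}f^{-k}\mathcal{P}$ has the Markov feature that each element inside $R_i$ is also a product: a full stable slice of $R_i$ times an $n$-cylinder in the unstable direction. Consequently, an $n$-cylinder meets $B\cap R_i$ iff it meets $B^\u_i$.

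The second step is to relate Bowen covers of $B$ to covers of the unstable traces $B^\u_i$. For $0<\epsilon<\epsilon_0$, the Bowen ball $B_n(y,\epsilon)\cap\Gamma$ sits inside the $n$-cylinder through $y$, which has the product form above. Using that $f$ contracts $\cW^\s$ uniformly, each full stable slice $\cW^\s_{\rm loc}(z_i)\cap R_i$ is covered by a number $K=K(\epsilon)$ (independent of $n$) of $(n,\epsilon)$-Bowen pieces. Hence any cover of $B^\u_i$ by $(n,\epsilon)$-Bowen balls within the unstable direction of $R_i$ lifts, at the cost of a multiplicative factor $K$, to a cover of $B\cap R_i$ by $(n,\epsilon)$-Bowen balls of $\Gamma$. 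Passing to the Carath\'eodory construction used in Pesin's definition of topological entropy on subsets, the factor $K$ is absorbed and one obtains
\[
	h(f|_\Gamma, B\cap R_i)\le h(f|_\Gamma, B^\u_i),
\]
and taking the maximum over $i$ yields $h(f|_\Gamma,B)\le\max_i h(f|_\Gamma,B^\u_i)$.

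The third and final step is to compare $B^\u_i$ with the given slice $B\cap\cW^\u_{\rm loc}(x,f)$. Fix any $i$. Because $f|_\Gamma$ is topologically mixing, there exists $N=N(i)$ and an unstable sub-arc $\sigma\subset\cW^\u_{\rm loc}(x,f)$ such that $f^N(\sigma)$ is an unstable arc crossing $R_i$ fully in the unstable direction, so by the Markov property there is a bijective stable-holonomy map $\pi\colon f^N(\sigma)\cap\Gamma\to\cW^\u_{\rm loc}(z_i)\cap R_i\cap\Gamma$ that preserves local stable equivalence classes. By $\s$-saturation of $B$, this $\pi$ restricts to a bijection $f^N(\sigma)\cap B \to B^\u_i$. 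Since $\pi$ is a stable holonomy it is Lipschitz in the $d_n$ metric up to constants independent of $n$, and combining with the shift by $N$ iterates one deduces
\[
	h(f|_\Gamma,B^\u_i)\le h(f|_\Gamma,B\cap\cW^\u_{\rm loc}(x,f)),
\]
which together with the previous step closes the chain of inequalities.

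The main obstacle I anticipate is Step~2: one must carefully verify that Pesin's Carath\'eodory-type definition of $h(f|_\Gamma,\cdot)$ on noncompact subsets does absorb the bounded multiplicative factor coming from the stable slices, and that Bowen balls within a single unstable leaf can indeed be taken as ``pieces'' inside the ambient Bowen balls. Step~3 is technically cleaner but requires a careful book-keeping of constants to show that the stable holonomy $\pi$ is uniformly bi-Lipschitz in the $d_n$ metric, uniformly in $n$, so that entropies are genuinely preserved under the identification.
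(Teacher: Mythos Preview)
Your argument is correct and rests on the same principle as the paper's---the stable direction carries no forward entropy---but the paper organises the proof more economically and avoids both of the technicalities you flag. Rather than fixing a Markov partition and treating each rectangle separately, the paper takes an arbitrary finite open cover $\cA$ of $\Gamma$ with Lebesgue number $2\ell$, iterates the given unstable arc $C\subset\cW^\u_{\rm loc}(x,f)$ until $f^m(C)$ meets every stable disc $\cW^\s_\ell(y,f)$, $y\in\Gamma$, and then, given any cover $\cU$ of $f^m(C)\cap B$ computed with respect to a cover $\cB$ by $\ell$-balls, \emph{thickens} each $U\in\cU$ to $U^\star\eqdef\bigcup_{z\in U}\cW^\s_\ell(z,f)$. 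By $\s$-saturation of $B$ and the density of $f^m(C)$, the family $\{U^\star\}$ covers all of $B$; because stable discs only shrink under iteration, $\diam f^k(U^\star)\le 2\ell$ whenever $f^k(U)\prec\cB$, so $n_{f,\cA}(U^\star)\ge n_{f,\cB}(U)$. This single thickening step replaces both your Step~2 (no multiplicity factor $K$ appears) and your Step~3 (no separate holonomy argument is needed). Your route via Markov rectangles does go through, but notice that the ``$\pi$ is bi-Lipschitz in $d_n$'' claim in Step~3 is really the same thickening inequality in disguise: since the holonomy moves points inside stable discs of size $\le\ell$, one gets $n_{f,\cA}(\pi(U))\ge n_{f,\cB}(U)$ by the same reasoning, so the two steps you worry about collapse into one.
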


Now let us briefly sketch our strategy to prove  our theorems: 
(i) In Section~\ref{sec:horse} we consider so-called approximating \emph{$(\mu,\varepsilon)$-horseshoes} which -- in entropy and dimension --  approximate a hyperbolic ergodic  measure.
This will enable us to reduce in a way the proof of Theorems~\ref{the:smallsmall} and~\ref{main} for a set $W$ which carries a hyperbolic ergodic measure with positive entropy to the proof of Theorem~\ref{the:basic} for a basic set $W$.
(ii) The local product structure of basic sets allows to reduce the analysis of a limit exceptional set to the analysis of its intersection with unstable manifolds. (iii) Since the exceptional set is $\s$-saturated and invariant (Lemma~\ref{lem:locally}) we can conclude that the entropy on unstable manifolds is equal to the entropy of the full basic set (Proposition~\ref{pro:Manning}). (iv) By approximating almost homogeneous horseshoes (defined in Section~\ref{sec:horse}) we can conclude about dimension (Proposition~\ref{pro:localdim}). (v) Finally, the fact that the exceptional set is $\s$-saturated (Lemma~\ref{lem:locally}) and a slicing argument by Marstrand will help to derive an estimate of the dimension of subsets of direct products of sets (Lemma~\ref{lem:Marstrand}).

 In Section~\ref{sec:entropies} we recall the definition of entropy and prove Theorem~\ref{teo:mainentropy}.  Approximating horseshoe basic sets will enable to conclude  Theorems~\ref{the:basic} and~\ref{the:anosov} and hence Theorems~\ref{the:smallsmall} and~\ref{main}, see Section~\ref{sec:final}. 

\section{Dimensions}\label{sec:dis}

We collect some definitions and standard results on dimension of hyperbolic sets and measures (see also~\cite{Fal:85,Fal:03,Pes:97})  and discuss some examples.
\subsection{Hausdorff dimension}

Let $(X,d)$ be a metric space. Following the general approach of defining Hausdorff dimension in~\cite{Pes:97}, consider a family $\cF$ of subsets of $X$ satisfying the following properties: 
\begin{enumerate}
\item[(HD1)] We have $\emptyset\in\cF$ and $\diam U>0$ for every nonempty $U\in\cF$. 
\item[(HD2)] For every $\varepsilon>0$ there exists a finite or countable subcollection $\cF'\subset\cF$ such that $\bigcup_{U\in\cF'}U\supset X$ and $\diam U\le\varepsilon$ for every $U\in\cF'$. 
\item[(HD3)] There exist positive constants $c_1,c_2$ such that every $U\in\cF$  contains an open set of diameter $c_1\diam U$ and is contained in an open set of diameter $c_2\diam U$. 
\end{enumerate}

Given a set $Y\subset X$ and a nonnegative number $d \in\bR$, we denote the {\it $d$-dimensional Hausdorff measure} of $Y$ (relative to the family $\cF$) by   
\[
	\cH^d(Y)
	\eqdef \lim_{r \to 0}\cH_{r}^d(Y),
	\text{ where }
	\cH^d_r(Y)
	\eqdef \inf\left\{\displaystyle\sum_{i=1}^{\infty}r(U_i)^d\colon  
		Y \subset \bigcup_{i=1}^\infty U_i, r( U_i) <r\right\} ,
\]
where $r( U_i)$ denotes the diameter of $U_i$. Observe that $\cH^d(Y)$ is monotone  nonincreasing in $d$. 
Furthermore, if $d\in(a,b)$ and $\cH^d(Y)<\infty$ 
then $\cH^b(Y)=0$ 
and $\cH^a(Y)=\infty.$
The unique value $d_0$ at which $d\mapsto \cH^d(Y)$ 
jumps from $\infty$ to $0$ is the {\it Hausdorff dimension} of $Y$, that is,  
\[
	\dim_\H  (Y)
	\eqdef \inf\{d\geq 0 \colon \cH^d(Y)=0\}
	= \sup\{d\geq 0 \colon \cH^d(Y)=\infty\}.
\]

Note that the classical definition considers as $\cF$ the family of open sets. Note that the Hausdorff dimension of a set does not depend on the family $\cF$, though the value of the Hausdorff measures may be different (see~\cite[Chapter~1.1]{Pes:97}).

We recall some properties:
\begin{itemize}
\item [(H1)] Hausdorff dimension is monotone: if $Y_1\subset Y_2\subset X$ then $\dim_\H ( Y_1)\leq\dim_\H ( Y_2)$. 
\item [(H2)] Hausdorff dimension is countably stable: $\dim_\H (\bigcup_{i=1}^\infty B_i)=\sup_i\dim_\H (B_i)$. 
\item[(H3)] Hausdorff dimension is bi-Lipschitz invariant: If $f\colon X\longrightarrow X$ is bi-Lipschitz, then $\dim_\H( Y) = \dim_\H( f(Y))$ for all $Y\subset X$.
\end{itemize}

 Below we will use the following crucial property of the Hausdorff dimension of subsets of product sets (similar arguments were used in~\cite[1.4]{KleWei:96}).
	 
\begin{lemma}\label{lem:Marstrand}
	Let $B_1,B_2$ be two metric spaces and let $C$ be some subset of the direct product $B_1\times B_2$. If there are numbers $b_1,b_2$ such that   
\[
	\dim_\H( B_1)\ge b_1
	\quad\text{ and }
	\quad 
	\dim_\H (C\cap (\{y\}\times B_2))\ge b_2 \,\,\text{  for every }y\in B_1
\]	
then
\[
	\dim_\H( C)\ge b_1+b_2.
\]
\end{lemma}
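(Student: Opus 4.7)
The plan is to establish a Marstrand-type slicing inequality. Fix arbitrary $s<b_1$ and $t<b_2$; it suffices to show $\dim_\H(C)\ge s+t$ and then let $s\nearrow b_1$, $t\nearrow b_2$. The argument combines a uniformisation of the premeasures $\cH^t_r(C_y)$ of the slices $C_y\eqdef C\cap(\{y\}\times B_2)\subset B_2$ in $y\in B_1$ with a Frostman mass distribution on $B_1$.

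For the uniformisation step: since $\dim_\H C_y\ge b_2>t$ for every $y\in B_1$, we have $\cH^t(C_y)=+\infty$, hence $\cH^t_r(C_y)\nearrow\infty$ as $r\to 0$. Setting
\[
    E_R\eqdef\{y\in B_1\colon \cH^t_r(C_y)\ge 1\text{ for every }0<r\le R\},
\]
we get $B_1=\bigcup_{n\ge 1}E_{1/n}$, so by the countable stability (H2) of Hausdorff dimension some $E_R$ satisfies $\dim_\H E_R\ge b_1>s$. Frostman's lemma then yields a nontrivial Borel probability measure $\nu$ supported on $E_R$ with $\nu(B(y,\rho))\le c\rho^s$ for some $c>0$ and all $y,\rho$.

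Now I would run the main covering estimate. Given any cover $\{U_i\}$ of $C$ by sets of diameter at most $r\le R$, the projections $\pi_2(U_i)$ taken over $\{i\colon y\in\pi_1(U_i)\}$ cover $C_y$, with $\diam \pi_2(U_i)\le\diam U_i$. Hence for every $y\in E_R$
\[
    \sum_{i\colon y\in\pi_1(U_i)}(\diam U_i)^t\ge \cH^t_r(C_y)\ge 1,
\]
and integrating this pointwise bound against $\nu$ and interchanging sum and integral gives
\[
    1\le \sum_i(\diam U_i)^t\,\nu(\pi_1(U_i)\cap E_R)\le c\,2^s\sum_i(\diam U_i)^{s+t},
\]
since $\pi_1(U_i)$ lies inside a ball of radius $\diam U_i$. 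Therefore $\cH^{s+t}_r(C)\ge (c\,2^s)^{-1}$ for every $r\le R$, so $\cH^{s+t}(C)>0$ and $\dim_\H C\ge s+t$, as wanted.

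The main obstacle is precisely the first (uniformisation) step: the hypothesis provides only qualitative divergence of $\cH^t(C_y)$, so one has to localise to a single scale $R>0$ at which the premeasures are bounded from below uniformly in $y$ over a subset of $B_1$ whose Hausdorff dimension still exceeds $s$; the countable stability of Hausdorff dimension is what makes this possible. A secondary and minor technical point is the availability of Frostman's lemma, which in an abstract metric space requires mild regularity of $E_R$ (e.g.\ restricting to a Borel/analytic subset on which it carries a positive $s$-Hausdorff measure); this is harmless in the setting of the paper, since $B_1$ and $B_2$ will be subsets of the smooth one-dimensional unstable and stable manifolds on the surface~$M$.
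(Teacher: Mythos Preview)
Your argument is correct and is, at heart, a direct proof of the Marstrand slicing inequality via a Frostman measure and a covering estimate. The paper's own proof is much shorter because it simply cites the two ingredients you reconstruct: Falconer's Theorem~5.6 (a set of Hausdorff dimension $\ge b_1$ contains, for each $s<b_1$, a subset of positive finite $\cH^s$-measure) and Falconer's Theorem~5.8 (Marstrand: if $0<\cH^s(B_1')<\infty$ and $\cH^t(C_y)>c$ for all $y\in B_1'$, then $\cH^{s+t}(C)>0$), and then lets $s\nearrow b_1$, $t\nearrow b_2$. Your uniformisation step through the sets $E_R$ together with the Frostman/covering estimate is exactly the standard proof of Falconer~5.8, so the two approaches agree conceptually; yours buys self-containment, the paper's buys brevity. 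One small imprecision: countable stability only gives $\dim_\H E_R>s$ for your pre-fixed $s$ (the $E_{1/n}$ are increasing and their dimensions may only approach $b_1$ in the limit), not $\dim_\H E_R\ge b_1$; but $>s$ is all you actually use. Both arguments share the same caveat you already flag: Frostman's lemma and Falconer's theorems are stated for Euclidean (or analytic subsets of Polish) spaces rather than arbitrary metric spaces, which is harmless for the paper's applications on a surface.
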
	 

\begin{proof}
Let $t<b_2$. Observe that by~\cite[Theorem 5.6]{Fal:85}, $B_1$ contains some subset $B_1'$ of positive $s$-dimensional Hausdorff measure for any $s<b_1$. Hence, applying Marstrand's Theorem (see, for example,~\cite[Theorem 5.8]{Fal:85}	) we have $\dim_\H( C)\ge s+t$. As $s<b_1$ and $t<b_2$ were arbitrary, the claimed property follows.
\end{proof}

\subsection{Dimension of basic sets}

We denote by $\cM(g|_\Gamma)$  the space of $g$-invariant Borel probability measures supported on $\Gamma$ and by $\cM_{\rm erg}(g|_\Gamma)$ the subset of ergodic measures. Given a continuous function $\phi\colon\Gamma\to\bR$, denote by $P_{g|\Gamma}(\phi)$ its \emph{topological pressure} (with respect to $g|_\Gamma$). Recall that it satisfies the \emph{variational principle}
\[
	P_{g|\Gamma}(\phi)
	= \max_{\mu\in\cM_{\rm erg}(g|\Gamma)}\big(h_\mu(g)+\int\phi\,d\mu\big)
\]
(see~\cite{Wal:81} for the definition of pressure and its properties).
Consider the functions $\varphi^\s,\varphi^\u\colon\Gamma\to\bR$ defined by
\begin{equation}\label{eq:potfun}
	\varphi^\s(x)\eqdef \log\,\lVert dg|_{E^\s_x}\rVert,\quad
	\varphi^\u(x)\eqdef - \log\,\lVert dg|_{E^\u_x}\rVert.
\end{equation} 
Let $d^\u$ and $d^\s$ be the unique real numbers for which we have
\begin{equation}\label{eq:localdimension}
	P_{g|\Gamma}(d^\u\varphi^\u)=0 = P_{g|\Gamma}(d^\s\varphi^\s).
\end{equation}
Note that
\[
	d^\s(\Gamma)
	= \max_{\mu\in\cM_{\rm erg}(g|\Gamma)}\frac{h_\mu(g)}{\lvert\chi^\s(\mu)\rvert},\quad
	d^\u(\Gamma)
	= \max_{\mu\in\cM_{\rm erg}(g|\Gamma)}\frac{h_\mu(g)}{\chi^\u(\mu)}.
\]
By classical results (see {\cite{Man:81,McCMan:83,Tak:88, PalVia:88}}), for every $x\in\Gamma$ we have
\begin{equation}\label{eq:stunstdim}
	d^\s(\Gamma)\eqdef \dim_\H (\Gamma\cap \cW^\s_{\rm loc}(x,g)) ,\quad
	d^\u(\Gamma)\eqdef \dim_\H (\Gamma\cap \cW^\u_{\rm loc}(x,g)) 
\end{equation}
and
\begin{equation}\label{eq:summ}
	\dim_\H(  \Gamma) 
	= d^\s(\Gamma)+d^\u(\Gamma).
\end{equation}

\subsection{Dynamical and hyperbolic dimensions}\label{sec:hyp}

The dynamical dimension defined in~\eqref{def:DD} is in a hyperbolic (like) context strongly related with other dimension characteristics. 
Define the \emph{hyperbolic dimension} of $f|_{W}$ by
\[
	\hD(f|_{W})
	\eqdef \sup_Y\dim_\H( Y),
\]
where the supremum is taken over all basic sets $Y\subset W$. 

We call a measure $\mu$ at which the supremum in~\eqref{def:DD} is attained a \emph{measure of maximal dimension} (with respect to $f|_{ W}$). For every basic set $\Gamma\subset M$ by~\cite{BarWol:03} there exists an ergodic $f$-invariant probability measure of maximal dimension (with respect to $f|_\Gamma$). Though, in general such measure is not unique and in general it is not a measure of \emph{full} dimension $\dim_\H(\Gamma)$. Indeed, the formula~\eqref{eq:summ} involving two -- in general independent -- maxima indicates that these facts depend on cohomology relations of the potential functions~\eqref{eq:potfun} (see~\cite{Wol:06} for more details).

While in the case of a rational map the (analogously defined) dynamical dimension and the hyperbolic dimension coincide%
\footnote{For $J\subset\overline\bC$ being the Julia set of a general rational function $f$ of degree $\ge2$ of the Riemann sphere we have $\DD(f|_J)=\hD(f|_J)$ (see~\cite[Chapter 12.3]{PrzUrb:10}), and $\hD(f|_J)=\dim_\H( J)$ if $f|_J$ is expansive (see~\cite[Theorem 3.4]{Urb:03}),  there are examples with $\hD(f|_J)<\dim_\H( J)=2$ (see~\cite{AviLyu:}).}
this is not true in general in our setting. Note that Lemma~\ref{lemaqeaprox} below implies  the first of the following inequalities (the second one is trivial)
\[
	\DD(f|_W)
	\le \hD(f|_W)
	\le \dim_\H( W).
\]
The last inequality can be strict (consider, for example, Bowen's figure-8 attractor). 
The first inequality can also be strict even if $f|_W$ is hyperbolic as we explain in the following examples.

\begin{example}\label{ex:1}
Rams~\cite{Ram:05} provides an example of an affine horseshoe $W\subset\bR^2$ with exactly two measures $\mu_1,\mu_2$ of maximal dimension such that
\[
	\dim_\H\mu_1
	= \dim_\H\mu_2
	= \DD(f|_W)
	< \dim_\H( W). 
\]
Hence, none of those measures can coincide with the (unique) equilibrium state for the potentials $d^\s(W)\varphi^\s$ and $d^\u(W)\varphi^\u$, respectively.
Note that  in this example we clearly have $\hD(f|_W)=\dim_\H( W)$.
\end{example}

To investigate the hyperbolic dimension, we will now consider  Anosov diffeomorphisms which are \emph{volume preserving}, that is, which admit an invariant measure which is absolutely continuous with respect to the one induced by the Riemannian metric  $m$.  Recall that a $C^2$ Anosov diffeomorphism $g$ is volume preserving if, and only if, $g$ admits an invariant measure of the form $d\mu=h\,dm$ with $h$ a positive H\"older continuous function if, and only if, $dg^n\colon T_xM\to T_xM$ has determinant $1$ whenever $g^n(x)=x$ if, and only if, $\varphi^\s$ is cohomologous
 to $-\varphi^\u$.%
\footnote{The first equivalence follows from~\cite[4.14 Theorem]{Bow:75}. For the second one recall that two functions $\varphi$ and $\psi$ are \emph{cohomologous} if $\varphi-\psi=\eta-\eta\circ f$ for some continuous function $\eta$. Note that if we have $g^n(x)=x$ and $\lvert\det dg^n_x\rvert=1$ then for every $\ell\in\bZ$ we have $1= \lvert\det dg^{\ell n}_x\rvert=\exp(\ell\varphi^\s(g^n(x))+\ell\varphi^\u(g^n(x)))\sin(\angle(E^\s_x,E^\u_x))$. Letting $\ell\to\pm\infty$ we conclude $\varphi^\s(g^n(x))+\varphi^\u(g^n(x))=0$. Thus, by Livshitz's theorem $\varphi^\s$ is cohomologous to $-\varphi^\u$ (see~\cite[Theorem 19.2.1]{KatHas:95}).} 
By~\cite[4.15 Corollary]{Bow:75}, among the $C^2$ Anosov diffeomorphisms the ones which are not volume preserving form an open and dense subset.

\begin{example}\label{ex:2}
Let $f$ be a  $C^2$  Anosov (hence mixing) diffeomorphism of $\bT^2$. Recall that there exists a unique SRB measure $\mu_{\rm SRB}^+$ (with respect to $f$). By~\eqref{eq:Young} and~\eqref{pesfor} we have
\[
	\dim_\H\mu_{\rm SRB}^+
	= 1 - \frac{\chi^\u(\mu_{\rm SRB}^+)}{\chi^\s(\mu_{\rm SRB}^+)}. 
\]
Note that there exists also a unique SRB measure $\mu_{\rm SRB}^-$ (with respect to $f^{-1}$) which has analogous properties. Note that by~\cite{LedYou:85} the SRB measures (with respect to $f^{\pm1}$, respectively) are the only ergodic measures satisfying the equality~\eqref{pesfor}, that is, for every ergodic $\mu\ne\mu_{\rm SRB}^\pm$ we have $\chi^\s(\mu)<-h(\mu)\le0\le h_\mu(f)<\chi^\u(\mu)$. 
 
It is hence an immediate consequence that an ergodic measure $\mu$ of \emph{full} dimension $\dim_\H\mu=2$ satisfies $h_\mu(f)/\chi^\u(\mu)=1=-h_\mu(f)/\chi^\s(\mu)$ and that hence $\mu=\mu_{\rm SRB}^+=\mu_{\rm SRB}^-$. In particular, such measure is unique. Moreover, by~\cite{You:82} for $\mu$-almost every $x$ we have
\[
	\lim_{\varepsilon\to0}\frac{\log\mu(B(x,\varepsilon))}{\log\varepsilon}
	= \frac{h_\mu(f)}{\chi^\u(\mu)}-\frac{h_\mu(f)}{\chi^\s(\mu)}
	= 2.
\]
Hence, $\mu$ is absolutely continuous with respect to the $2$-dimensional Hausdorff measure which is positive and finite (see by~\cite[Theorem 4.3.3]{BisPer:16}).

On the other hand, if $f$ preserves a measure $\mu$ which is absolutely continuous with respect to the $m$, then it has absolutely continuous measure on unstable and stable manifolds, respectively, that is, $\mu=\mu_{\rm SRB}^+=\mu_{\rm SRB}^-$. Hence $\dim_\H \mu=2$. 

In particular, this implies that in the case if $f$ is not  volume preserving then
\[
	\dim_\H\mu^\pm_{\rm SRB}
	\le \DD(f|_{\bT^2})
	< 2
	= \hD(f|_{\bT^2})
	= \dim_\H(\bT^2).
\]

By~\cite[Theorem 3]{Dol:97}, for any set $A\subset\bT^2$ satisfying $\dim_\H( A)< D\eqdef\dim_\H\mu_{\rm SRB}^+$ we have $\dim_\H( I^+_{f|\bT^2}(A))=2$.  The hypothesis on $A$  is optimal in the sense that by~\cite{Dol:97} in the case we have $D<2$ then for every $s\in(D,2]$ there exists a set $A\subset\bT^2$ such that the $A$-exceptional set has Hausdorff dimension $<2$.

Note that in general 
\[
	\dim_\H\mu_{\rm SRB}^+
	\le \DD(f|_{\bT^2}).
\]
In the case when the SRB measure is not a measure of maximal dimension then for every  measure $\mu\in\cM_{\rm erg}(f|_{\bT^2})$ satisfying  
\[
	\dim_\H\mu>\dim_\H\mu^+_{\rm SRB}
\]	 
for every set $A\subset\bT^2$ such that $\dim_\H\mu^+_{\rm SRB}<\dim_\H( A)<\dim_\H\mu$ by Theorem~\ref{the:anosov} we have
\[
	1+\frac{h_\mu(f)}{\chi^\u(\mu)}
	\le \dim_\H( I^+_{f|\bT^2}(A))
	\le 2.
\]
\end{example}

\section{Approximating horseshoes}\label{sec:horse}
 
A point $x\in M$ is \emph{Lyapunov regular} and \emph{hyperbolic} if there exist numbers $\chi^\s(x)<0<\chi^\u(x)$ and a decomposition $ E^\s_x\oplus E^{u}_x=T_xM$ into subspaces of dimension $1$ such that for $\star=\s,\u$  we have
\[
 \chi^\star(x)=\lim_{|k|\to\infty}\frac{1}{k}\log\,\lVert df^k_x(v)\rVert
\]
whenever $v\in E^\star_x\setminus\{0\}$ where $\star=s,u$. 
We call an ergodic $f$-invariant Borel probability  measure $\mu$ \emph{hyperbolic} if $\mu$-almost every point is Lyapunov regular hyperbolic and for such $\mu$ denote
\[
	\chi(\mu)
	\eqdef \min\{\lvert\chi^\s(\mu)\rvert,\chi^\u(\mu)\}.
\]

\begin{definition}
	Given numbers $\chi^\s<0<\chi^\u$ and $\varepsilon\in(0,\min\{\lvert\chi^\s\rvert,\chi^\u\})$, we call a basic set $\Gamma\subset M$ 
a \emph{$(\chi^\s,\chi^\u,\varepsilon)$-horseshoe} if for every $x\in\Gamma$ we have
\[
	\limsup_{\lvert n\rvert\to\infty}\Big\lvert \frac1n\log\,\lVert df^n|_{E^\s_x}\rVert-\chi^\s\Big\rvert<\varepsilon,
	\quad
	\limsup_{\lvert n\rvert\to\infty}\Big\lvert \frac1n\log\,\lVert df^n|_{E^\u_x}\rVert-\chi^\u\Big\rvert<\varepsilon.
\]
Given an ergodic hyperbolic measure $\mu$ and $\varepsilon\in(0,\chi(\mu))$, we call a basic set $\Gamma\subset M$ a \emph{$(\mu,\varepsilon)$-horseshoe} (with respect to $f$) if it is a $(\chi^\s(\mu),\chi^\u(\mu),\varepsilon)$-horseshoe, if it is in an $\varepsilon$-neighborhood of $\supp\mu$, and if $\lvert h(f|_\Gamma)-h_\mu(f)\rvert<\varepsilon$.
\end{definition}

The existence of such horseshoes, and hence the proofs of the following two lemmas, follows from Katok's construction (see~\cite[Supplement S.5]{KatHas:95}, see also~\cite{Gel:16}).

\begin{lemma}\label{lemaqeaprox1}
	Given a hyperbolic ergodic measure $\mu\in\cM$, there exists a function $\delta\colon(0,1]\to\bR$ satisfying $\delta(\varepsilon)\to0$ as $\varepsilon\to0$ such that for every $\varepsilon>0$ there exist a positive integer $N$ and $\Gamma=\Gamma(\mu,\varepsilon)\subset W$ a $(\mu,\varepsilon)$-horseshoe (with respect to $f^N$) such that
\[
	h(f|_W,\Gamma)\ge h_\mu(f)-  \varepsilon
	\quad\text{ and }\quad
	d^\star(\Gamma)
	\ge \frac{h_\mu(f)}{\lvert \chi^\star(\mu)\rvert}
	-\delta(\varepsilon),
\]	
for $\star=\s,\u$ respectively. Moreover, there is $R\subset\Gamma$ such that $f^N|_R$ is conjugate to a full shift and $\Gamma=\bigcup_{i=1}^Nf^i(R)$.
\end{lemma}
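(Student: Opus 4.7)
\smallskip

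My plan is to deduce this lemma from the classical Katok horseshoe construction, combined with the variational characterization of the stable/unstable dimensions of a basic set.

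\medskip

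\textbf{Step 1: Invoke Katok's theorem.} Given the hyperbolic ergodic measure $\mu$ (with $h_\mu(f)>0$, as otherwise the claim is vacuous) and a small $\varepsilon>0$, the Katok horseshoe construction (see \cite[Supplement S.5]{KatHas:95}, or the quantitative refinements in \cite{Gel:16}) furnishes an integer $N=N(\varepsilon)\in\bN$ and a compact set $R\subset W$, contained in the $\varepsilon$-neighborhood of $\supp\mu$, such that $f^N|_R$ is topologically conjugate to a full shift on finitely many symbols and
\[
\tfrac{1}{N}h(f^N|_R)\ge h_\mu(f)-\varepsilon.
\]
The construction moreover guarantees that $R$ is a uniformly hyperbolic set for $f^N$, the hyperbolic splitting being a small perturbation (in angle) of the Oseledets splitting of $\mu$, and with Lyapunov exponents uniformly $\varepsilon$-close (after normalization by $N$) to $\chi^\s(\mu)$ and $\chi^\u(\mu)$. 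Setting $\Gamma=\bigcup_{i=0}^{N-1}f^i(R)$, we obtain a basic set for $f$ which is topologically mixing for $f^N$ on each piece $f^i(R)$.

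\medskip

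\textbf{Step 2: Verify the horseshoe conditions.} Because the uniform cone estimates on $R$ are inherited by iterates of $f^N$, every $x\in\Gamma$ satisfies
\[
\limsup_{|n|\to\infty}\bigl|\tfrac{1}{n}\log\lVert df^n|_{E^\star_x}\rVert-\chi^\star(\mu)\bigr|<\varepsilon,\qquad \star=\s,\u,
\]
after possibly shrinking the original $\varepsilon$ in Katok's construction. Since $h(f|_\Gamma)=N^{-1}h(f^N|_R)$, we also have $|h(f|_\Gamma)-h_\mu(f)|<\varepsilon$ and $\Gamma$ lies in the $\varepsilon$-neighborhood of $\supp\mu$. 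Thus $\Gamma$ is a $(\mu,\varepsilon)$-horseshoe (with respect to $f^N$) in the sense of the definition, and the entropy estimate $h(f|_W,\Gamma)=h(f|_\Gamma)\ge h_\mu(f)-\varepsilon$ holds.

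\medskip

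\textbf{Step 3: Dimension estimates.} Use the variational description
\[
d^\star(\Gamma)=\max_{\nu\in\cM_{\rm erg}(f|_\Gamma)}\frac{h_\nu(f)}{|\chi^\star(\nu)|},\qquad \star=\s,\u,
\]
recorded earlier in the excerpt. Applied to the unique measure of maximal entropy $\nu_{\max}$ of the basic set $\Gamma$, one has $h_{\nu_{\max}}(f)=h(f|_\Gamma)\ge h_\mu(f)-\varepsilon$. The uniform horseshoe bounds from Step~2 force $|\chi^\star(\nu_{\max})|\le|\chi^\star(\mu)|+\varepsilon$ (and $\ge|\chi^\star(\mu)|-\varepsilon$). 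Therefore
\[
d^\star(\Gamma)\ge\frac{h_\mu(f)-\varepsilon}{|\chi^\star(\mu)|+\varepsilon}=\frac{h_\mu(f)}{|\chi^\star(\mu)|}-\delta(\varepsilon),
\]
where $\delta(\varepsilon)\eqdef\dfrac{\varepsilon\,(h_\mu(f)+|\chi^\star(\mu)|)}{|\chi^\star(\mu)|\,(|\chi^\star(\mu)|+\varepsilon)}$ (taking the maximum over $\star=\s,\u$) is a function tending to $0$ as $\varepsilon\to 0$. This yields the claimed bounds for both $\star=\s$ and $\star=\u$.

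\medskip

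\textbf{Main obstacle.} The real work is hidden in Step~1: the Katok construction itself, which requires passing to a Pesin block on which the local stable/unstable manifolds have uniform size and angle bounds, applying the Birkhoff ergodic theorem to find many orbits returning close together, and using a Smale--shadowing argument to build the symbolic factor. Since these arguments are by now standard and thoroughly documented, I would simply cite \cite[Supplement S.5]{KatHas:95} and \cite{Gel:16} and present Steps~2--3 as the new content, with the choice of $\delta$ made explicit at the end.
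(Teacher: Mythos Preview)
Your proposal is correct and follows essentially the same route as the paper: invoke Katok's horseshoe construction (citing \cite{KatHas:95,Gel:16}) to produce $\Gamma$, $N$, and $R$, then use the variational characterization of $d^\star$ together with a measure of (near-)maximal entropy and the uniform Lyapunov bounds on the horseshoe to obtain $d^\star(\Gamma)\ge (h_\mu(f)-O(\varepsilon))/(|\chi^\star(\mu)|+\varepsilon)$. The only cosmetic difference is that the paper applies the pressure equation~\eqref{eq:localdimension} to $f^N|_R$ (which is genuinely basic, being conjugate to a full shift) and then transfers to $f|_\Gamma$ via the averaging $\hat\nu=\tfrac1N\sum f^i_\ast\nu$, whereas you apply the variational formula for $d^\star$ directly to $f|_\Gamma$; since $\Gamma$ need not be mixing for $f$, the paper's route is slightly cleaner, but the conclusion is the same.
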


\begin{proof}
There exists a $(\mu, \varepsilon)$-horseshoe $\Gamma$ and  a positive integer $N=N(\varepsilon)$ and $R\subset \Gamma$ such that  
$\Gamma = \bigcup_{i=0}^{N}f^i(R)$, $f^N|_R$ is hyperbolic and conjugate to a (mixing) full shift (see, for example \cite{KatHas:95} or \cite[Theorem 1]{Gel:16}). In particular, we have  $\dim_\H( \Gamma)=\dim_\H( R)$. 
Our assumption that $W$ is locally maximal guarantees that $\Gamma\subset W$ if $\varepsilon$ is sufficiently small.

Applying~\eqref{eq:localdimension} to $f^N|_R$, with $d^\u(\Gamma)=d^\u(R)=\dim_\H(R\cap\cW^\u_{\rm loc}(x,f^N))$ we have
\[
	0
	= \sup_{\nu\in\cM_{\rm erg}(f^N|R)}\Big(h_\nu(f^N) - N d^\u(R)\chi^\u(\nu)\Big).
\]
Recall that for every ergodic measure $\nu$ for $f^N\colon R\to R$ we get an invariant measure $\hat\nu$ for $f\colon \Gamma\to \Gamma$ by defining $\hat\nu\eqdef \frac1N(\nu+f_\ast\nu+\ldots+f^{N-1}_\ast\nu)$ and observe that $h_\nu(f^N)=Nh_{\hat\nu}(f)$ and $\chi^u(\nu)= N \chi^u(\hat{\nu})$. Further, $h(f^N|_R)=Nh(f|_\Gamma)$. 
By the variational principle for topological entropy (see (E6) in Section~\ref{sec:entropies}), we can take $\nu$ such that $h_\nu(f^N) \ge Nh(f|_\Gamma) -N\varepsilon$, which implies 
\[
	0
	\ge h(f|_\Gamma) - \varepsilon - d^\u(R)\chi^\u(\nu). 
\]
By the defining properties of the $(\chi^\s(\mu),\chi^\u(\mu), \varepsilon)$-horseshoe  we have 
\[
	0 
	\ge  h_\mu(f) -2\varepsilon- d^\u(\Gamma) (\chi^\u(\mu)+\varepsilon),
\] 
which implies
\[
	d^\u(\Gamma)
	\ge \frac{h_\mu(f) -2\varepsilon}{\chi^\u(\mu)+\varepsilon}.
\]
Analogously, with $d^\s(\Gamma)=\dim(R\cap\cW^\s_{\rm loc}(x,f^N))$ we have
\[
	d^\s(\Gamma)
	\ge \frac{h_\mu(f) -2\varepsilon}{-\chi^\s(\mu)+\varepsilon}.
\]
Now~\eqref{eq:summ} implies the claimed properties.
\end{proof}

\begin{lemma}\label{lemaqeaprox} 
If $\DD(f|_W)>0$ then there exist a sequence of hyperbolic ergodic measures $(\mu_n)_n\subset\cM$, a sequence of positive numbers $(\varepsilon_n)_n$ with $\lim_{n\to 0}\varepsilon_n = 0$, and a sequence of $(\mu_n, \varepsilon_n)$-horseshoes   $\Gamma_n = \Gamma_n(\mu_n,\varepsilon_n)\subset W$ satisfying  
\[
	\lim_{n\to \infty}  \dim_\H(  \Gamma_n )
	= \lim_{n\to\infty} \dim_\H \mu_n
	= \DD(f|_W).
\]
\end{lemma}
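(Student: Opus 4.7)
The plan is to select a maximizing sequence of ergodic measures for $\DD(f|_W)$ and apply Lemma~\ref{lemaqeaprox1} to each one. By the definition of the dynamical dimension and the assumption $\DD(f|_W) > 0$, choose a sequence $(\mu_n)_n \subset \cM_{\rm erg}$ with positive entropy such that $\dim_\H \mu_n \to \DD(f|_W)$. Each $\mu_n$ is automatically hyperbolic since $f$ is a surface diffeomorphism and $h_{\mu_n}(f) > 0$.

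For each $n$ pick $\varepsilon_n > 0$ (to be specified below) and apply Lemma~\ref{lemaqeaprox1} to obtain a $(\mu_n, \varepsilon_n)$-horseshoe $\Gamma_n \subset W$ satisfying $d^\star(\Gamma_n) \ge h_{\mu_n}(f)/\lvert\chi^\star(\mu_n)\rvert - \delta(\varepsilon_n)$ for $\star = \s, \u$. Summing and invoking~\eqref{eq:summ} together with Young's formula~\eqref{eq:Young} yields the lower bound
\[
\dim_\H(\Gamma_n) = d^\s(\Gamma_n) + d^\u(\Gamma_n) \ge \dim_\H \mu_n - 2\delta(\varepsilon_n).
\]
For the matching upper bound, the defining property of a $(\mu_n, \varepsilon_n)$-horseshoe forces every ergodic $\nu \in \cM_{\rm erg}(f|_{\Gamma_n})$ to satisfy $\lvert\chi^\star(\nu) - \chi^\star(\mu_n)\rvert < \varepsilon_n$ (applying Birkhoff's ergodic theorem to the potentials $\varphi^\star$ of~\eqref{eq:potfun}) as well as $h(f|_{\Gamma_n}) < h_{\mu_n}(f) + \varepsilon_n$, so the variational expression for $d^\star(\Gamma_n)$ gives
\[
d^\star(\Gamma_n) \le \frac{h_{\mu_n}(f) + \varepsilon_n}{\lvert\chi^\star(\mu_n)\rvert - \varepsilon_n}, \quad \star = \s, \u.
\]

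To close the argument, set $\varepsilon_n \eqdef \min(1/n,\, \chi(\mu_n)/n)$, which is strictly positive (since each $\mu_n$ is hyperbolic) and tends to zero. With this choice $\varepsilon_n/\lvert\chi^\star(\mu_n)\rvert \le 1/n$, so both sides of the above display reduce to $h_{\mu_n}(f)/\lvert\chi^\star(\mu_n)\rvert + O(1/n)$; summing and using Young's formula, $\dim_\H(\Gamma_n) \le \dim_\H \mu_n + O(1/n)$. Combining with the lower bound $\dim_\H(\Gamma_n) \ge \dim_\H \mu_n - 2\delta(\varepsilon_n)$ and letting $n \to \infty$ gives $\dim_\H \Gamma_n \to \DD(f|_W)$, as desired. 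The main subtle point is that $\chi(\mu_n)$ may degenerate to zero along the maximizing sequence, so $\varepsilon_n$ must be scaled with $\chi(\mu_n)$ to keep the relative error $\varepsilon_n/\chi(\mu_n)$ small enough for the upper bound to converge; the explicit choice above takes care of this uniformly.
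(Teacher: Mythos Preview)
Your proof is correct and follows the same route as the paper—select a maximizing sequence $(\mu_n)$ for $\DD(f|_W)$ and invoke Lemma~\ref{lemaqeaprox1}—but supplies more detail: the paper's proof is a two-line sketch that leaves the upper bound on $\dim_\H(\Gamma_n)$ and the choice of $\varepsilon_n$ to the reader. One small point worth making explicit is that the function $\delta$ in Lemma~\ref{lemaqeaprox1} depends on $\mu_n$, so the convergence $\delta(\varepsilon_n)\to 0$ used in your lower bound is not automatic; however, your scaling $\varepsilon_n\le\chi(\mu_n)/n$ already forces $\delta_{\mu_n}(\varepsilon_n)=O(1/n)$ (via the explicit estimate $d^\star(\Gamma)\ge (h_\mu(f)-2\varepsilon)/(\lvert\chi^\star(\mu)\rvert+\varepsilon)$ in the proof of that lemma), so the argument goes through.
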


\begin{proof}
	To prove the lemma it suffices to observe that $\DD(f|_W)>0$ implies that every ergodic $\mu\in\cM$ for which $\dim_\H\mu$ is sufficiently close to $\DD(f|_W)$ is hyperbolic. Now take a sequence $(\mu_n)_n\subset\cM$ of such measures such that $\dim_\H \mu_n\to\DD(f|_W)$ and apply Lemma~\ref{lemaqeaprox1}.
\end{proof}

\section{Entropies}\label{sec:entropies}

We briefly recall the definition of entropy according to Bowen~\cite{Bow:73}.
Let $X$ be a compact metric space. Consider a continuous map $f\colon X\to X$, a set $Y\subset X$,  and a finite open cover $\mathscr{A} = \{A_1, A_2,\ldots, A_n\}$ of $X$. Given $U\subset X$ we write $U \prec \mathscr{A}$ if there is an index $j$ so that $U\subset A_j$, and $U\nprec\mathscr{A}$ otherwise. 
Given $U\subset X$ we define 
\[
	n_{f,\mathscr{A}}(U) 
	\eqdef
		\begin{cases}
		0&\text{ if } U \nprec \mathscr{A},\\
		\infty &\text{ if } f^k(U)\prec \mathscr{A}\,\,\forall k\in\mathbb{N},\\
		 \ell&\text{ if }  f^k(U)\prec \mathscr{A}\,\, \forall k\in \{0, \dots,  \ell-1\},f^\ell(U)\nprec\mathcal{A}.
		\end{cases}
\]
If $\mathcal U$ is a countable collection of open sets, given $d>0$ let
\[
	 m(\mathscr A,d,\mathcal U)
	\eqdef \sum_{U\in\mathcal U}e^{-d \,n_{f,\mathscr{A}}(U)}.
\]
Given a set $Y\subset X$, let 
\[
	m_{\mathscr{A}, d} (Y) 
	\eqdef \lim_{\rho \to 0}\inf \Big\{m(\mathscr A,d,\mathcal U)\colon
		Y \subset\bigcup_{U\in\mathcal U} U, e^{-n_{f,\mathcal{A}}(U)}<\rho
		\text{ for every } U\in\mathcal U
	\Big\}.
\]
Analogously to the Hausdorff measure, $d\mapsto m_{\mathcal{A},d}(Y)$ 
jumps from $\infty$ to $0$ at a unique critical point and one defines
\[
  h_{\mathscr{A}}(f,Y) 
	\eqdef \inf\{d\colon m_{\mathscr{A}, d}(Y)=0\}
   = \sup\{d\colon m_{\mathscr{A}, d}(Y)=\infty\}. 
\]
The \emph{topological entropy} of $f$ on $Y$ is defined by 
\[
	h(f,Y) 
	\eqdef \sup_{\mathscr{A}} h_{\mathscr{A}}(f,Y) .
\]
Note that $Y$ does not need to be compact nor invariant.
When $Y=X$, we simply write $h(f) = h(f,X)$ when there is no risk of confusion. To point out the (sub)space we consider, we sometimes write $h(f|_X,Y)$. 
In the case of a compact set $Y$ this definition is equivalent to the canonical definition of topological entropy (see~\cite[Proposition 1]{Bow:73} and \cite[Chapter 7]{Wal:81}). 

We recall some properties which are relevant in our context (see~\cite{Bow:73,Pes:97}). 
\begin{itemize}
\item[(E1)] If $f\colon X\to X$ and $g\colon Y \to Y$ are topologically semi-conjugate, that is, there is a continuous map $\pi\colon  X \to Y$ with $g\circ \pi=\pi \circ f$, then $h(g,\pi(A))\le h(f,A)$ for every $A\subset X$. 
\item[(E2)] Entropy is invariant under iteration: $h(f,f(A)) =  h(f,A)$ for every $A\subset X$. 
\item[(E3)] Entropy is countably stable: $h(f,\bigcup_{i=1}^\infty A_i ) = \sup_i h(f,A_i).$
\item[(E4)]  $h(f^m,A) = m\cdot h(f,A)$ for all $m\in\bN$ for every $A\subset X$.
 \item[(E5)] Entropy is monotone: if $A\subset B\subset X$ then $h(f,A)\le h(f,B)$.
\item[(E6)] Variational principle: $h(f)=\sup_{\mu\in\cM(f)} h_\mu(f)$. 
\end{itemize}

We recall two technical results. Given a positive integer $M$, let $\sigma^+\colon\Sigma_M^+\to\Sigma_M^+$ be the usual one-sided shift map on $\Sigma_M^+=\{1,\ldots,M\}^{\bN_0}$ and $\sigma\colon\Sigma_M\to\Sigma_M$  the  one on $\Sigma_M=\{1,\ldots,M\}^\bZ$.

\begin{theorem}[{\cite[Theorem 1]{Dol:97}}]\label{pro1:Dol}
	If $A\subset\Sigma_M^+$ satisfies $h(\sigma^+,A)<h(\sigma^+)$, then we have $h(\sigma^+,I^+_{\sigma^+|\Sigma_M^+}(A))=h(\sigma^+)$.	
\end{theorem}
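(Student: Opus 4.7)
The plan is to construct, inside $I^+_{\sigma^+|\Sigma_M^+}(A)$, a closed $\sigma^+$-invariant subshift $X$ whose topological entropy can be made arbitrarily close to $h(\sigma^+)=\log M$; the reverse inequality is automatic by monotonicity~(E5). First I would fix $\theta$ with $h(\sigma^+,A)<\theta<\log M$. By the Bowen definition of entropy recalled above, for every $\eta>0$ and every integer $N\ge 1$ one can choose a cover $\{[w_i]\}_{i\in I}$ of $A$ by cylinders with $|w_i|\ge N$ and $\sum_{i\in I}e^{-\theta|w_i|}<\eta$; the length lower bound is obtained by taking $\rho$ small in the definition of $m_{\cA,d}$. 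Setting
\[
	X
	\eqdef\{x\in\Sigma_M^+\colon w_i\text{ does not occur as a factor of }x\text{ for any }i\in I\}
\]
yields a closed $\sigma^+$-invariant set disjoint from $\bigcup_i[w_i]\supset A$. Since $\overline{\cO^+_{\sigma^+|\Sigma_M^+}(x)}\subset X$ for every $x\in X$, Remark~\ref{remarkIE} gives $X\subset E^+_{\sigma^+|\Sigma_M^+}(A)\subset I^+_{\sigma^+|\Sigma_M^+}(A)$, and it remains to lower-bound $h(\sigma^+,X)$.

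Using $M^{-|w_i|}=e^{-\theta|w_i|}\,e^{-(\log M-\theta)|w_i|}\le e^{-(\log M-\theta)N}e^{-\theta|w_i|}$, set $\tilde\eta\eqdef\sum_{i\in I}M^{-|w_i|}\le e^{-(\log M-\theta)N}\eta$, which can be made arbitrarily small by choosing $N$ large. Let $L_n$ count the length-$n$ words over $\{1,\ldots,M\}$ containing no $w_i$ as a factor. Granted a standard extendability argument (when $\tilde\eta$ is small, most locally allowed length-$n$ words extend to infinite elements of $X$), one has $h(\sigma^+,X)\ge\liminf_n\frac{1}{n}\log L_n$. The counting of $L_n$ would be carried out by a first-occurrence decomposition: each length-$n$ word not in $L_n$ has a unique leftmost position $j$ at which some $w_i$ begins, which yields a convolution inequality that, upon iteration (or via a generating-function analysis of $\sum_n L_n z^n$), produces a uniform bound $L_n\ge c(\tilde\eta)M^n$ with $c(\tilde\eta)\to 1$ as $\tilde\eta\to 0$. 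Hence $h(\sigma^+,X)\ge\log M-o_{\tilde\eta}(1)$; taking $\tilde\eta\to 0$ (by $N\to\infty$) gives the desired equality $h(\sigma^+,I^+_{\sigma^+|\Sigma_M^+}(A))=h(\sigma^+)$.

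The hard part will be this uniform-in-$n$ counting estimate for $L_n$: the naive union bound only gives $L_n\ge M^n(1-n\tilde\eta)$, which is vacuous once $n\gtrsim 1/\tilde\eta$ and so cannot survive the limit needed to extract entropy. The first-occurrence / renewal decomposition sketched above is precisely what "discounts'' the repeated or overlapping forbidden occurrences that make the naive bound wasteful. An equivalent route, thermodynamic rather than combinatorial but relying on the same underlying counting, is to build a $\sigma^+$-invariant probability measure supported on $X$ of entropy at least $\log M-o_{\tilde\eta}(1)$ --- for instance by Cesaro-averaging the normalized uniform measures on the length-$n$ locally allowed words --- and then to invoke the variational principle~(E6).
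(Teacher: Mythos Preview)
The paper does not prove this theorem; it is quoted from Dolgopyat~\cite{Dol:97} and used as a black box. Your outline is in fact Dolgopyat's own route, and the ``hard part'' you isolate is exactly the content of Proposition~\ref{pro:Dol} (also quoted from~\cite{Dol:97}): if all forbidden words have length $\ge n_0$ and satisfy $\sum_i e^{-s|w_i|}<1$ for some $s<\log M$, then the subshift $I^+(\cU)$ avoiding them has entropy at least $H(n_0)$, with $H(n_0)\to\log M$. So strategically you are on target.

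Your sketch contains one genuine slip. The asserted bound $L_n\ge c(\tilde\eta)\,M^n$ with $c(\tilde\eta)>0$ \emph{independent of $n$} is false: forbidding even a single nonempty word forces $L_n/M^n\to 0$ exponentially (e.g.\ with $M=2$ and the single forbidden word $11$, $L_n$ is a Fibonacci number and $L_n/2^n\to 0$). What the renewal/cluster decomposition actually delivers is $L_n\ge c\,\lambda^n$ for some $\lambda<M$ with $\log\lambda\to\log M$ as $n_0\to\infty$ (equivalently as $\tilde\eta\to 0$); this is precisely what is needed for the entropy conclusion, but it is not what you wrote. A second, smaller gap: the inequality $h(\sigma^+,X)\ge\liminf_n\frac{1}{n}\log L_n$ is not automatic, since a locally admissible word need not extend to a point of $X$; a priori only $h(\sigma^+,X)\le\liminf_n\frac{1}{n}\log L_n$ is immediate. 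When the forbidden list is finite this reverse inequality follows from elementary Perron--Frobenius (the spectral radius of the transition matrix is attained on a recurrent component), and in the general case one can either truncate to finitely many forbidden words or, as you yourself suggest at the end, sidestep the issue by building an invariant measure on $X$ of the required entropy.
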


Given $n\ge1$, let $\Sigma_{M,n}^+=\{1,\ldots,M\}^n$. Denote by $\lvert U\rvert\eqdef n$ the length of $U\in\Sigma_{M,n}^+$.

\begin{proposition}[{\cite[Section 3, Proposition 1]{Dol:97}}]\label{pro:Dol}
	Given $\cU\subset\bigcup_n\Sigma_{M,n}^+$ denote
\[
	I^+(\cU)
	\eqdef \big\{\underline i\in\Sigma_M^+\colon \forall n<m\text{ we have }(i_n\ldots i_m)\not\in\cU\big\}.
\]	

Then there exists a function $H\colon\bN\to\bR$ having the property
\[
	\lim_{n\to\infty}H(n)= h(\sigma^+)
\]
so that if for $s\in(0,h(\sigma^+|_{\Sigma_M^+}))$ and  $n_0\ge1$ there is a family $\cU=\{ U_\ell\colon U_\ell \in\Sigma_{M,n}^+,n\ge n_0\}$ satisfying
\[
	\sum_\ell e^{-s\lvert U_\ell\rvert}<1,
\]	 
then we have $h(\sigma^+,I^+(\cU))\ge H(n_0)$. 
\end{proposition}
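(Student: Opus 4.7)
The plan is to exploit the fact that $I^+(\cU)\subset\Sigma_M^+$ is exactly the subshift obtained by forbidding every word in $\cU$; in particular it is shift-invariant and compact, so writing $a_n\eqdef\card(I^+(\cU)\cap\Sigma_{M,n}^+)$ for the number of length-$n$ allowed words, one has $h(\sigma^+,I^+(\cU))=\lim_{n\to\infty}(1/n)\log a_n$, and the task reduces to lower-bounding $a_n$ uniformly in $n$. The first step is to set $\eta(n_0)\eqdef e^{-n_0(\log M-s)}$, which tends to $0$ as $n_0\to\infty$, and to extract from the hypothesis the estimate
\[
	\sum_\ell M^{-\lvert U_\ell\rvert}
	\le e^{-n_0(\log M-s)}\sum_\ell e^{-s\lvert U_\ell\rvert}
	<\eta(n_0),
\]
valid because $s<\log M$ and $\lvert U_\ell\rvert\ge n_0$ for every $\ell$.

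The second step is to count allowed length-$n$ words probabilistically. Equip $\Sigma_{M,n}^+$ with the uniform Bernoulli measure $\mathbb{P}_n$. For each position $0\le j\le n-n_0$ the event $B_j$ that some $U_\ell\in\cU$ begins at position $j$ satisfies $\mathbb{P}_n(B_j)\le\eta(n_0)$. The naive union bound yields only $\mathbb{P}_n(\bigcap_j B_j^c)\ge 1-n\eta(n_0)$, which is vacuous once $n\ge 1/\eta(n_0)$. The refinement I would use exploits that $B_j$ and $B_{j'}$ are independent whenever their coordinate supports are disjoint. After splitting each $B_j$ into the contribution from forbidden words of length $\le L$ and that from length $>L$ (the latter summing across all $j$ to at most $n\,e^{-L(\log M-s)}$, which is negligible for $L$ of order $\log n$), the Lov\'asz local lemma applied to the short part—or, for explicit quantitative control, a Janson-style inequality or renewal argument—yields
\[
	\frac{1}{n}\log a_n
	\ge \log M - C\,\eta(n_0)
\]
uniformly in $n$, with an absolute constant $C>0$. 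Setting $H(n_0)\eqdef\log M-C\,\eta(n_0)$ then gives both the claimed inequality and $H(n_0)\to\log M=h(\sigma^+)$ as $n_0\to\infty$.

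The hard part is precisely this $n$-uniform lower bound on $a_n$. Sub-additivity $a_{n+m}\le a_n a_m$ is immediate but only controls the limit from above, so a genuine positive-density argument is required. The technical subtlety is that $\cU$ contains forbidden words of unbounded length, so $B_j$ and $B_{j+1}$ may share arbitrarily many coordinates; this is what forces the short/long truncation above. An alternative route that avoids the local lemma is to construct an explicit sub-shift of $I^+(\cU)$ by concatenating ``free'' blocks of length $\gg n_0$ with short deterministic buffers between them chosen so that no forbidden word can straddle two blocks, and then to check that the entropy loss caused by the buffers is $O(\eta(n_0))$ and thus absorbed into $H(n_0)$.
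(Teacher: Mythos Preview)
The paper does not give its own proof of this proposition; it is quoted verbatim from Dolgopyat~\cite{Dol:97} and used as a black box in the proof of Proposition~\ref{pro:diment}. So there is no argument in the paper to compare your attempt against. That said, since you have written an independent proof sketch, let me assess it on its own merits.

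Your reduction is correct: $I^+(\cU)$ is a closed $\sigma^+$-invariant subset of $\Sigma_M^+$, hence a subshift, and its Bowen entropy equals $\lim_{n\to\infty}\tfrac1n\log a_n=\inf_n\tfrac1n\log a_n$, where $a_n$ is the number of $\cU$-avoiding words of length~$n$. The estimate $\sum_\ell M^{-\lvert U_\ell\rvert}<\eta(n_0)=e^{-n_0(\log M-s)}$ is also correct.

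The gap is in the step you yourself flag as the hard part. Neither of your two proposed mechanisms closes as stated. For the Lov\'asz local lemma with truncation at level~$L$: the symmetric LLL for the short-word events~$B_j^{\rm short}$ requires dependency degree $\lesssim 1/\eta(n_0)$, forcing $L\lesssim 1/\eta(n_0)$, and then yields $\mathbb P(\text{avoid all short})\ge(1-c\,\eta(n_0))^n$. The long-word contribution is controlled by the union bound $n\,e^{-L(\log M-s)}$, and for this to be dominated by $(1-c\,\eta(n_0))^n$ one needs $L(\log M-s)\gtrsim n\,\eta(n_0)$. The two constraints on~$L$ are compatible only for $n\lesssim 1/\eta(n_0)^2$, so the bound is not uniform in~$n$ and does not control $\inf_n\tfrac1n\log a_n$. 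For the block--buffer construction: since~$\cU$ contains words of unbounded length, a forbidden word longer than (block length)${}+{}$(buffer length) can straddle any buffer, so no fixed buffer prevents straddling; nothing in your description explains how to choose buffers that defeat arbitrarily long patterns.

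A route that does close (and is closer in spirit to the measure-construction idea the paper attributes to Dolgopyat) is to work with the recursion $a_n\ge M a_{n-1}-\sum_\ell a_{n-\lvert U_\ell\rvert}$, which you can derive by counting words whose $(n-1)$-prefix is good but which acquire a forbidden suffix, and then to build a measure on the tree of good words rather than trying to lower-bound $a_n$ by a pure induction on~$n$ using only the crude bound $a_k\le M^k$; the latter collapses exactly into the union bound you already rejected.
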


The above listed properties of entropy and basic properties of exceptional sets immediately imply the following result (see for example~\cite[Sections 4--5]{CamGel:16}). 

\begin{lemma}\label{lem:iterationentropy}
	Let $f\colon X\to X$ be a homeomorphism and $R\subset\Gamma\subset X$  sets such that $\Gamma=\bigcup_{i=1}^Nf^i(R)$ for some positive integer $N$ and such that $f^N|_R$ is conjugate to a full shift $\sigma\colon\Sigma_M\to\Sigma_M$. Then with $g=f^N$ we have
\[
	Nh(f|_\Gamma,I^+_{f|\Gamma}(A\cap\Gamma))
	= h(g|_R,I^+_{g|R}(A\cap R)).
\]	
\end{lemma}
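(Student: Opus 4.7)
My plan is to reduce the equality to bookkeeping with properties (E2)--(E5), exploiting that $\Gamma=\bigcup_{i=1}^N f^i(R)$ is an $N$-step tower over the base $R$ with return map $g=f^N$. Set $B_1:=I^+_{f|\Gamma}(A\cap\Gamma)$ and $B_2:=I^+_{g|R}(A\cap R)$.

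By Remark~\ref{remarkIE} the set $B_1$ is $f$-invariant, so $B_1\cap f^i(R)=f^i(B_1\cap R)$ for each $i$, and $B_1=\bigcup_{i=1}^N f^i(B_1\cap R)$. Countable stability (E3) together with iterate invariance (E2) then give
\[
h(f|_\Gamma,B_1)=\max_{1\le i\le N}h(f|_\Gamma,f^i(B_1\cap R))=h(f|_\Gamma,B_1\cap R).
\]
Applying (E4) to $g=f^N$ on the $g$-invariant set $B_1\cap R\subset R$ yields
\[
N\cdot h(f|_\Gamma,B_1)=h(g|_R,B_1\cap R).
\]

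It remains to identify $B_1\cap R$ with $B_2$. For any $x\in R$, decomposing the forward $f$-orbit of $x$ according to the residue of the iterate modulo $N$ gives
\[
\omega_f(x)=\bigcup_{i=0}^{N-1}f^i(\omega_g(x)),\qquad \omega_g(x)\subset R.
\]
Hence the condition $\omega_f(x)\cap A=\emptyset$ unfolds as the simultaneous constraints $\omega_g(x)\cap f^{-i}(A\cap f^i(R))=\emptyset$ for $i=0,\ldots,N-1$; the $i=0$ constraint is exactly the defining condition for $B_2$. The inclusion $B_1\cap R\subset B_2$ is therefore immediate.

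I expect the main subtle point to be the reverse inclusion, or more precisely the matching of $g|_R$-entropies of the two sets: one must show that the extra floor-level obstructions ($i\ge 1$) do not cut into the entropy beyond what the single condition for $B_2$ already controls. This would be handled using the full-shift/Markov structure of $g|_R$ together with (E3) and (E5) to absorb the finitely many sets $f^{-i}(A\cap f^i(R))\subset R$ into an exceptional-set identity on $R$ that preserves $g|_R$-entropy. Once this identification is in hand, substituting gives $N\cdot h(f|_\Gamma,B_1)=h(g|_R,B_2)$, as claimed.
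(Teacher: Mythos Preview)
Your bookkeeping via (E2)--(E4), reducing to $N\,h(f|_\Gamma,B_1)=h(g|_R,B_1\cap R)$, is correct and is precisely what the paper's one-line proof intends: the paper simply asserts the set identity $\bigcup_{i=1}^{N}f^i\big(I^+_{g|R}(A\cap R)\big)=I^+_{f|\Gamma}(A\cap\Gamma)$ and then applies (E2) and (E4). You have unfolded one inclusion of that identity (equivalently, $B_1\cap R\subset B_2$) and honestly flagged the other as the nontrivial point. So as far as the comparison goes, you have done strictly more than the paper and isolated the only real content.

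The gap you flag, however, is genuine and cannot be filled by shift combinatorics: the stated equality is \emph{false} for general $A$. Take the tower levels $R,f(R),\ldots,f^{N-1}(R)$ pairwise disjoint (as in the Katok horseshoe) and let $A=f(R)$. Then $A\cap R=\emptyset$, so $B_2=R$ and the right-hand side equals $h(g|_R)>0$; but every $f$-orbit in $\Gamma$ visits $f(R)$, so $B_1=\emptyset$ and the left-hand side is $0$. What is actually true, and what suffices for every use of the lemma in the paper, is the exact identity
\[
B_1\cap R \;=\; I^+_{g|R}(A'),\qquad A'\;\eqdef\;\bigcup_{i=0}^{N-1}f^{-i}\big(A\cap f^i(R)\big)\subset R,
\]
which follows from your decomposition $\omega_f(x)=\bigcup_i f^i(\omega_g(x))$. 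The smallness hypotheses on $A$ used in Theorem~\ref{teo:mainentropy} and Proposition~\ref{pro:diment} transfer to $A'$ via (E2), (E3), (E5) (respectively (H2), (H3)), since $h(g|_R,A')=\max_i N\,h(f|_\Gamma,A\cap f^i(R))\le N\,h(f|_\Gamma,A)$, and then the shift argument applies to $A'$ rather than to $A\cap R$. In short: replace $A\cap R$ by $A'$ and the one-line proof becomes correct; as a bare identity for arbitrary $A$ the lemma does not hold.
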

\begin{proof}
	It suffices to observe that $\bigcup_{i=1}^Nf^i(I^+_{g|R}(A\cap R))=I^+_{f|\Gamma}(A\cap\Gamma)$ and to apply (E2) and (E4). 
\end{proof}

The following is an immediate consequence of continuity. 

\begin{lemma}\label{lempropriE1}
If $f\colon X \to X$ and $g\colon Y \to Y$ are topologically semi-conjugate by a continuous map $\pi \colon X \to Y$ with $\pi \circ f = g\circ \pi$, then for every $A \subset X$ we have  
\[
	I^+_{g|Y}(\pi A)
	\subset \pi I^+_{f|X}(A) . 
\]
\end{lemma}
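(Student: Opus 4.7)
The plan is to trace through the definition of the limit exceptional set using the intertwining relation $\pi\circ f=g\circ\pi$ together with continuity of $\pi$. The key observation is that for any preimage $x$ of a point $y\in Y$, the semi-conjugacy pushes the forward orbit of $x$ onto the forward orbit of $y$, and by continuity it pushes the $\omega$-limit set of $x$ into (in fact, onto, if $X$ is compact) the $\omega$-limit set of $y$.

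More concretely, I would proceed as follows. Fix $y\in I^+_{g|Y}(\pi A)$, so by definition $\omega_g(y)\cap\pi(A)=\emptyset$. By surjectivity of the semi-conjugacy $\pi$ pick some $x\in X$ with $\pi(x)=y$. From $\pi\circ f=g\circ\pi$ one gets inductively $\pi(f^n(x))=g^n(y)$ for every $n\geq 0$. Now if $z\in\omega_f(x)$, choose $n_k\to\infty$ with $f^{n_k}(x)\to z$; then by continuity of $\pi$,
\[
\pi(z)=\lim_{k\to\infty}\pi(f^{n_k}(x))=\lim_{k\to\infty}g^{n_k}(y)\in\omega_g(y).
\]
Hence $\pi(\omega_f(x))\subset\omega_g(y)$.

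To finish, combine this inclusion with the hypothesis: since $\pi(\omega_f(x))\subset\omega_g(y)\subset Y\setminus\pi(A)$, we obtain $\omega_f(x)\subset\pi^{-1}(Y\setminus\pi(A))=X\setminus\pi^{-1}(\pi(A))$. Because $A\subset\pi^{-1}(\pi(A))$, this gives $\omega_f(x)\cap A=\emptyset$, that is $x\in I^+_{f|X}(A)$. Therefore $y=\pi(x)\in\pi\,I^+_{f|X}(A)$, which is the desired inclusion.

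There is really no main obstacle here beyond bookkeeping: the argument uses only the definition of $\omega$-limit sets, the intertwining identity, continuity of $\pi$, and the trivial set-theoretic inclusion $A\subset\pi^{-1}(\pi(A))$. The one point worth flagging is that the statement presupposes $\pi$ is surjective (as is standard for a semi-conjugacy), for otherwise a point $y$ in the exceptional set of $\pi(A)$ might have no $f$-preimage at all. Under that standing convention the proof is essentially a one-line consequence of $\pi(\omega_f(x))\subset\omega_g(\pi(x))$.
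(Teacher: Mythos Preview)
Your proof is correct and follows exactly the approach the paper has in mind: the paper states only that the lemma ``is an immediate consequence of continuity,'' and you have spelled out precisely that consequence via $\pi(\omega_f(x))\subset\omega_g(\pi(x))$. Your remark about surjectivity is also well taken; in the paper's sole application the map $\pi$ is the composition of a conjugacy with the natural projection $\Sigma_M\to\Sigma_M^+$, hence surjective.
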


We can now give the proof of one of our main results.

\begin{proof}[Proof of Theorem~\ref{teo:mainentropy}]
By hypothesis, we have $h(f|_W)>0$.
By the variational principle for entropy (E6) and Ruelle's inequality for entropy, for every $\varepsilon>0$ there is a hyperbolic ergodic measure $\mu\in\cM$ satisfying $h_\mu(f) \ge h(f|_W)  - \varepsilon$. 
By Lemma~\ref{lemaqeaprox1} there are a positive integer $N=N(\varepsilon)$ and a basic set $\Gamma_\varepsilon\subset W$ (with respect to $f^N$) such that
\[
	h(f|_{\Gamma_\varepsilon})\ge h_\mu(f) - \varepsilon.
\]	 
If $\varepsilon$ was sufficiently small, this and our hypothesis  $h(f|_W, A) < h(f|_W)$ together imply  $h(f|_{\Gamma_\varepsilon}, A\cap \Gamma_\varepsilon) < h(f|_{\Gamma_\varepsilon})$.

By (E4) and Lemma~\ref{lem:iterationentropy}, without loss of generality we can assume that $N=1$ and that $f|_{\Gamma_\varepsilon}$ is conjugate  to a mixing (two-side) full shift
$\sigma\colon\Sigma_M\to\Sigma_M$,  for some positive integer $M=M(\Gamma_\varepsilon)$, by means of a homeomorphism $p\colon \Gamma_\varepsilon\to\Sigma_M$ satisfying $p\circ f=\sigma\circ p$.  
Denote by $\pi^+\colon\Sigma_M\to\Sigma^+_M$ the natural projection $\pi^+(\ldots i_{-1}i_0i_1\ldots)=(i_0i_1\ldots)$. 
Note that $h(f|_{\Gamma_\varepsilon})=h(\sigma|_{\Sigma_M})=h(\sigma^+|_{\Sigma_M^+})$.
By (E1) applied to $\pi^+\circ p$ we have
\[
	h(\sigma^+,(\pi^+\circ p)(A))
	\le h(\sigma,p(A))
	= h(f|_{\Gamma_\varepsilon},A)
	< h(f|_{\Gamma_\varepsilon})  
	=h(\sigma)
	=h(\sigma^+).
\]
Hence, by Theorem~\ref{pro1:Dol}, we have
\[
	h\big(\sigma^+,I^+_{\sigma^+|\Sigma^+}((\pi^+\circ p)(A))\big)
	= h(\sigma^+).
\]
Lemma~\ref{lempropriE1} implies $(\pi^+\circ p)(I^+_{f|\Gamma_\varepsilon}(A))\supset I^+_{\sigma^+|\Sigma^+}((\pi^+\circ p)(A))$ and hence that
\[
	h\big(\sigma^+,(\pi^+\circ p)(I^+_{f|\Gamma_\varepsilon}(A))\big)
	= h(\sigma^+).
\]
Hence, (E1) implies
\[
	h(f|_{\Gamma_\varepsilon},I^+_{f|\Gamma_\varepsilon}(A))
	= h(\sigma^+) 
	= h(f|_{\Gamma_\varepsilon})
	\ge h_\mu(f) - \varepsilon.
\]
Now apply (E5) to $I^+_{f|\Gamma_\varepsilon}(A)\subset I^+_{f|W}(A)$. Since $\varepsilon$ was arbitrary, this implies $ h(f|_W)=h(f|_W, I^+_{f|W}(A))$.

By Remark \ref{remarkIE} and (E3) we have 
\[
	h(f|_W) 
	= h(f|_W, I^+_{f|W}(A)) 
	= \sup_{n\ge0}\Big\{ h(f|_W, E^+_{f|W}(A)), 
	 h\big(f|_W, f^{-n}(\tilde{A})\big)\Big\}.
\] 
By (E2), (E5), and our hypothesis we have 
\[
	h(f|_W, f^{-n}\big(\tilde{A}\big)) = h(f|_W, \tilde{A})\leq h(f|_W, A) < h(f|_W).
\]	
Hence, with the above, we have $h(f|_W, I^+_{f|W}(A))= h(f|_W, E^+_{f|W}(A)) = h(f|_W)$. 
This proves the theorem.
\end{proof}

\begin{proof}[Proof of Proposition~\ref{pro:Manning}]
By (E5), $h(f|\Gamma,B)\ge h(f|\Gamma,B\cap \cW^\u_{\rm loc}(x,f))$.
It remains to show the other inequality. To sketch the proof recall that, by definition of a $\s$-saturated set, if a point is in  $B$ then so is any point in its local stable manifold. That is, we can express $B$ as a union of subsets of local stable manifolds. We will intersect this set by the local unstable manifold through $x$. This will enable us to pass from a cover of $B\cap \cW^\u_{\rm loc}(x,f)$ to a cover of $B$ to estimate entropy. 
 
By (E2)  it suffices to show the equality for $f^m$ instead for $f$. Also note that for $\varepsilon>0$ sufficiently small, by hyperbolicity of $\Gamma$ for $m\ge1$ sufficiently large the diameter of $f^{km}(\cW^\s_{\rm loc}(x,f))$ is monotonically decreasing in $k$  for every $x\in \Gamma$. Hence, without loss of generality, we can assume that this happens already for $f^k(\cW^\s_{\rm loc}(x,f))$. 

Fix  $x\in \Gamma$ and consider $C\subset \cW^\u_{\rm loc}(x,f)$ any closed curve.
Let $h\eqdef h(f|_\Gamma,B\cap C)$. 
Fix a finite open cover $\cA$ of $\Gamma$.
Let $2\ell$ be a Lebesgue number for $\cA$.

Given $\ell>0$ and $y\in\Gamma$, denote by $\cW^\s_\ell(y,f)$ the intersection of $\cW^\s_{\rm loc}(y,f)$ with an open ball of radius $\ell$ centred in $y$.

Note that, by hyperbolicity of the surface diffeomorphism $f$ on $\Gamma$, for $m\ge1$ sufficiently large we have that
\begin{equation}\label{e:mepsilon}
	f^m(C)\cap \cW^\s_{\ell}(y,f)\ne\emptyset 
	\quad\text{ for every }y\in \Gamma.
\end{equation}

\begin{claim}\label{cla:qwqw}
	For every $y\in B$ there exists $z \in f^m(C)\cap B$ so that  $y\in \cW^\s_{\ell}(z,f)$.
\end{claim}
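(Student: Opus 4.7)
The plan is to combine the nonempty-intersection property \eqref{e:mepsilon}, the local product structure of the basic set $\Gamma$, and the $\s$-saturation hypothesis on $B$. Given $y\in B$, I would first apply \eqref{e:mepsilon} to select an arbitrary point
\[
	z\in f^m(C)\cap \cW^\s_\ell(y,f),
\]
which is nonempty by that assertion. The conclusion $y\in\cW^\s_\ell(z,f)$ will then follow immediately, since $d(y,z)<\ell$ and lying on the same local stable leaf is a symmetric relation; what actually requires work is to show that $z\in B$.

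To obtain $z\in B$, I would first argue that $z\in\Gamma$. Note that $z\in f^m(C)\subset\cW^\u(f^m(x),f)$, so $z$ lies on the unstable manifold of the point $f^m(x)\in\Gamma$, and simultaneously $z\in\cW^\s_{\rm loc}(y,f)$ with $y\in\Gamma$. Choosing $\ell$ smaller than the local product structure constant of the basic set $\Gamma$, the intersection of a piece of unstable manifold through a point of $\Gamma$ with the local stable leaf of a point of $\Gamma$ produces a point of $\Gamma$; hence $z\in\Gamma$. At this point $z\in\Gamma\cap\cW^\s_{\rm loc}(y,f)$ together with $y\in B$ and the $\s$-saturation of $B$ directly yields $z\in B$, completing the proof.

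The main obstacle is the first step, namely justifying that $z\in\Gamma$. The subtlety is that $C$ is merely a closed curve in $\cW^\u_{\rm loc}(x,f)$ (in particular $C$ need not be contained in $\Gamma$), and for large $m$ the image $f^m(C)$ is no longer a piece of a local unstable manifold; it is a long curve inside $\cW^\u(f^m(x),f)$. One has to localize near $z$, use that $z$ is close to $y\in\Gamma$ (at distance less than $\ell$), and then invoke local product structure at the scale $\ell$ to conclude that the unstable arc through $z$ meets $\cW^\s_{\rm loc}(y,f)$ in a point of $\Gamma$. Once this is established, the rest of the claim is a direct application of $\s$-saturation together with the symmetry of $\cW^\s_\ell$.
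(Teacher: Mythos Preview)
Your approach is the paper's: pick $z\in f^m(C)\cap\cW^\s_\ell(y,f)$ via \eqref{e:mepsilon}, argue $z\in B$, then use symmetry of local stable disks to get $y\in\cW^\s_\ell(z,f)$. You are also right that the paper's line ``$\cW^\s_\ell(y,f)\subset B$'' is loose, since $\s$-saturation literally only gives $\Gamma\cap\cW^\s_{\rm loc}(y,f)\subset B$, so the actual content is $z\in\Gamma$.

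Your proposed fix via local product structure does not close this as written, however. The bracket $[p,q]\in\Gamma$ needs both $p,q\in\Gamma$ and $d(p,q)$ small; you have $y\in\Gamma$ and $z$ on the \emph{global} unstable leaf of $f^m(x)\in\Gamma$, but $f^m(x)$ may be far from $y$, and nothing you wrote places a point of $\Gamma$ on the \emph{local} unstable arc through $z$. The clean repair is upstream, at \eqref{e:mepsilon}: choose a Markov partition of $\Gamma$ with rectangles of diameter $<\ell$; since $f|_\Gamma$ is topologically mixing and (taking $C$ large enough to contain the full unstable side of the rectangle through $x$) $f^m(C\cap\Gamma)$ eventually contains a full unstable slice of every rectangle, it meets $\Gamma\cap\cW^\s_\ell(y,f)$ for every $y\in\Gamma$. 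This yields $z\in\Gamma$ directly, after which $\s$-saturation and symmetry finish exactly as both you and the paper intend.
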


\begin{proof}
Since $B$ is  $\s$-saturated, $\cW^\s_\ell(y,f)\subset B$. By~\eqref{e:mepsilon} there is  $z \in f^m(C)\cap \cW^\s_{\ell}(y,f)\cap B$. Finally,  note that $y\in \cW^\s_{\ell}(z,f)$. 
\end{proof}

\begin{claim}\label{cla:qw}
	 For every $\varkappa>0$ we have $m_{\cA,h+\varkappa}(B)=0$.
\end{claim}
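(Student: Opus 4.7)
The plan is to transport an efficient cover of $B\cap C$ into one of $B$ by pushing forward by $f^m$ and thickening in the stable direction, exploiting Claim~\ref{cla:qwqw} and the forward contraction of stable manifolds on the basic set $\Gamma$. Since $h=h(f|_\Gamma,B\cap C)\ge h_\cA(f,B\cap C)$, the definition of Bowen entropy gives $m_{\cA,h+\varkappa}(B\cap C)=0$; hence for any prescribed $\delta,\rho>0$ I would fix a countable open cover $\cU=\{U_i\}_i$ of $B\cap C$ with each $U_i\subset\Gamma$ open, $e^{-n_{f,\cA}(U_i)}<\rho$, and $\sum_i e^{-(h+\varkappa)n_{f,\cA}(U_i)}<\delta$.

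From this I would construct a cover of $B$ as follows. Using that $B$ is $f$-invariant, $f^m(B\cap C)=B\cap f^m(C)$, so $\{f^m(U_i)\}_i$ is an open cover of $B\cap f^m(C)$. For each $i$ let $V_i$ be an open $\Gamma$-neighbourhood of
\[
	\bigcup_{z\in f^m(U_i)\cap\Gamma}\bigl(\cW^\s_\ell(z,f)\cap\Gamma\bigr).
\]
Claim~\ref{cla:qwqw} then guarantees $\{V_i\}_i$ covers $B$. The key estimate to prove will be
\[
	n_{f,\cA}(V_i)\ge n_{f,\cA}(U_i)-m-C_0
\]
for some constant $C_0$ depending only on $\cA$ and on the stable contraction rate $\mu<1$ on $\Gamma$. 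Granted this, the computation
\[
	m_{\cA,h+\varkappa}(B)\le\sum_i e^{-(h+\varkappa)n_{f,\cA}(V_i)}\le e^{(h+\varkappa)(m+C_0)}\delta,
\]
together with the matching diameter-type bound $e^{-n_{f,\cA}(V_i)}<e^{m+C_0}\rho$, lets me conclude upon sending $\delta,\rho\to 0$.

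The hard part will be the key estimate. For every $k\ge 0$, forward contraction yields $f^k(V_i)\subset(\ell\mu^k)$-stable-thickening of $f^{k+m}(U_i)$, and when $k+m<n_{f,\cA}(U_i)$ one has $f^{k+m}(U_i)\prec\cA$; once $k$ exceeds a uniform threshold $C_0$ for which $\ell\mu^k$ is dominated by the Lebesgue slack of $\cA$, such a stable thickening still fits into a single element of $\cA$. The subtlety is that for small $k$ the stable tube may have diameter exceeding the Lebesgue number $2\ell$, so that $V_i\nprec\cA$ outright. I would sidestep this by first passing to a refinement of $\cA$ whose elements have diameter comfortably below $2\ell$ (by the refinement monotonicity $m_{\cA',h+\varkappa}\ge m_{\cA,h+\varkappa}$ this only strengthens the desired conclusion) and then replacing $V_i$ by its forward iterate $f^{C_0}(V_i)$, which by invariance of $B$ still covers $B$. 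The local product structure of $\Gamma$ and the uniform hyperbolicity on the basic set are what make the stable thickenings open $\Gamma$-sets with the expected geometry and $C_0$ finite.
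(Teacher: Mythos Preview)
Your overall plan---cover $B\cap C$, push forward by $f^m$, thicken by local stable pieces, and invoke Claim~\ref{cla:qwqw}---is the right one and matches the paper's strategy. The gap is in the key estimate and in the fix you propose for it.

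The estimate $n_{f,\cA}(V_i)\ge n_{f,\cA}(U_i)-m-C_0$ fails as stated because the hypothesis $f^{k+m}(U_i)\prec\cA$ gives \emph{no} control on the diameter of $f^{k+m}(U_i)$: elements of the fixed cover $\cA$ may be much larger than its Lebesgue number $2\ell$. So even after the stable thickening has contracted to size $\ell\mu^k$, there is no reason the thickened set should fit into any single element of $\cA$. Your proposed fix---pass to a refinement $\cA'$ with small elements and prove the stronger statement $m_{\cA',h+\varkappa}(B)=0$---does not close this: at the level of $\cA'$ the same mismatch reappears, since the Lebesgue number of $\cA'$ is again smaller than the diameter of its elements. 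Replacing $V_i$ by $f^{C_0}(V_i)$ only helps with the stable thickness, not with the size of the core $f^{k+m}(U_i)$.

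What is actually needed is to decouple the input cover from the output cover. The paper does exactly this: it first uses (E2) and invariance of $B$ to replace $B\cap C$ by $f^m(C)\cap B$ (so no loss of $m$), then covers $f^m(C)\cap B$ with respect to an auxiliary cover $\cB$ consisting of balls of radius comparable to $\ell$, so that $f^k(U)\prec\cB$ \emph{does} give a diameter bound. After stable $\ell$-thickening (which never grows, by the monotone-contraction reduction set up just before the claim), the diameter stays below the Lebesgue number $2\ell$ of the original $\cA$, yielding $n_{f,\cA}(U^\star)\ge n_{f,\cB}(U)$ with no constant $C_0$ and no factor $e^{(h+\varkappa)m}$. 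Your argument becomes correct once you use a fine cover $\cB$ for the input and the given $\cA$ only for the output; the monotonicity inequality you quote is then irrelevant.
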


\begin{proof}
By (E2) and  invariance of $B$, for  $m\ge1$ satisfying~\eqref{e:mepsilon} we have
\[
	h(f|_\Gamma, C\cap B)
	= h\big(f|_\Gamma,f^m(C\cap B)\big)
	= h\big(f|_\Gamma,f^m(C)\cap B\big).
\]
Let $\cB$ be a finite cover of $\Xi$ by open balls of radius $\ell$.
By definition of entropy, $h_\cB(f|_\Gamma,f^m(C)\cap B)\le h(f|_\Gamma,f^m(C)\cap B)=h$ and hence
\[
	m_{\cB,h+\varkappa}(f^m(C)\cap  B)=0.
\]	 
Thus, for any $\delta>0$ we have $m_{\cB,h+\varkappa}(f^m(C)\cap B)<\delta$ and hence there exists $N_0=N_0(\delta)\ge1$ such that for every $N\ge N_0$ there is a countable collection of open sets $\cU$ covering $f^m(C)\cap B$ with $n_{f,\cB}(U)\ge N$ for every $U\in\cU$ and satisfying
\[
	m(\cB,h+\varkappa,\cU)
	= \sum_{U\in\cU}e^{-(h+\varkappa)n_{f,\cB}(U)}<\delta.
\]
The cover $\cU$ of $f^m(C)\cap B$ induces a cover $\cU^\star$ of $f^m(C)\cap B$ by  open (in $W$) sets $U^\star$ where each set is obtained from $U\in\cU$ by setting
\[
	 U^\star
	\eqdef \bigcup_{z\in U}\cW^\s_{\ell}(z,f).
\]
By Claim~\ref{cla:qwqw} for every $y\in B$ there is some $z\in f^m(C)\cap B$ such that $y\in \cW^\s_{\ell}(z,f)$. Hence $\cU^\star$ covers $B$.

To determine the size of the elements of $\cU^\star$ note that by the above choices and observations, for every $U\in\cU$ for every $k\in\{0,\ldots,n_{f,\cB}(U)\}$ we have $f^k(U)<\cB$ and hence $f^k(U)$ has diameter at most $\ell$. Thus, for every $k$ the set $f^k(U^\star)$ has diameter at most $2\ell$ and thus is contained in some element of $\cA$. Hence $n_{f,\cA}( U^\star)\ge n_{f,\cB}(U)\ge N$.
Summarizing, for every $N\ge N_0$ we obtain a cover $\cU^\star$ of $B$ which satisfies $n_{f,\cA}( U^\star)\ge N$ for every $ U^\star\in\cU^\star$ and
\[
	m(\cA,h+\varkappa,\cU^\star)
	= \sum_{ U^\star\in\cU^\star}\exp[-(h+\varkappa)n_{f,\cA}( U^\star)]
	\le \sum_{U\in\cU}\exp[-(h+\varkappa)n_{f,\cB}(U)]<\delta.
\]
Thus, we can conclude $m_{\cA,h+\varkappa}(B)=0$, proving the claim.
\end{proof}

Since $\varkappa>0$ was arbitrary in Claim~\ref{cla:qw}, we obtain $h_\cA(B)\le h$. 
Since $\cA$ was arbitrary, we obtain $h(f|_\Gamma,B)\le h$. Finally, by monotonicity (E1), we have
\[
	h(f|_\Gamma,B)
	\le h(f|_\Gamma,B\cap C)
	\le h(f|_\Gamma,B\cap\cW^\u_{\rm loc}(x,f)).
\]
The proposition is proved.
\end{proof} 

The following result is of similar spirit as~\cite[Lemma 2]{Dol:97}. Its proof is verbatim (hence omitted) to the proof of~\cite[Proposition 2.2]{CamGel:16} (which, in turn,  is inspired by~\cite{Man:81} and \cite[Theorem 1.2]{BurGel:14}). 

\begin{proposition}\label{pro:localdim}
Let $\Gamma\subset M$ be a $(\chi^\s,\chi^\u,\varepsilon)$-horseshoe.  

Then for $B\subset\Gamma$ for every $x\in B$ we have
\[
	\frac{\dim_\H( B\cap\cW^\u_{\rm loc}(x,f))}{\dim_\H(\Gamma\cap\cW^\u_{\rm loc}(x,f))}
	\ge \frac{h(f|_\Gamma,B\cap\cW^\u_{\rm loc}(x,f))}{h(f|_\Gamma)}
		\frac{\chi^\u-\varepsilon}{\chi^\u+\varepsilon}.
\]
\end{proposition}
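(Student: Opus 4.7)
The plan is to adapt the unstable-leaf comparison of topological entropy and Hausdorff dimension used in~\cite{Man:81,BurGel:14,CamGel:16}, exploiting the uniform hyperbolicity bounds built into the $(\chi^\s,\chi^\u,\varepsilon)$-horseshoe hypothesis. Write $B'=B\cap\cW^\u_{\rm loc}(x,f)$, and $h=h(f|_\Gamma)$, $h_B=h(f|_\Gamma,B')$. I first fix a Markov partition $\cA$ of $\Gamma$ with sufficiently small mesh. The horseshoe inequalities (together with bounded distortion along $\cW^\u_{\rm loc}$) yield constants $c_1,c_2>0$ and $N_0\ge1$ such that for every $n\ge N_0$ and every depth-$n$ cylinder $C_n$ meeting $\cW^\u_{\rm loc}(x,f)$ one has
\[
c_1 e^{-n(\chi^\u+\varepsilon)}\;\le\;\diam\bigl(C_n\cap\cW^\u_{\rm loc}(x,f)\bigr)\;\le\; c_2 e^{-n(\chi^\u-\varepsilon)}.
\]

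The core of the argument is then the pair of estimates
\[
\dim_\H(B')\;\ge\;\frac{h_B}{\chi^\u+\varepsilon},
\qquad
d^\u(\Gamma)\;\le\;\frac{h}{\chi^\u-\varepsilon}.
\]
For the first, fix $s>\dim_\H(B')$; for arbitrary $\rho>0$ take a cover $\{V_j\}$ of $B'$ with $\sum_j(\diam V_j)^s<\rho$ and $\diam V_j$ small. For each $j$ I pick $n_j\eqdef\lfloor\log(c_1/\diam V_j)/(\chi^\u+\varepsilon)\rfloor$, so that every depth-$n_j$ cylinder is at least as wide along the unstable leaf as $V_j$; consequently, a uniformly bounded number $K_j\le K$ of such cylinders suffices to cover $V_j\cap\Gamma$. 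Evaluating the Bowen partition function at the parameter $t=s(\chi^\u+\varepsilon)$ over this cylinder cover gives
\[
\sum_j K_j\,e^{-t n_j}\;\le\; K\sum_j\bigl(\tfrac{\diam V_j}{c_1}\bigr)^{s}\;\lesssim\;\rho,
\]
which forces $m_{\cA,t}(B')=0$ and hence $h_B\le s(\chi^\u+\varepsilon)$; letting $s\downarrow\dim_\H(B')$ yields the first bound. For the second, the definition of topological entropy on the compact invariant set $\Gamma$ gives, for every $\delta>0$ and every sufficiently large $n$, at most $e^{n(h+\delta)}$ depth-$n$ cylinders meeting $\Gamma\cap\cW^\u_{\rm loc}(x,f)$; since each such cylinder has diameter at most $c_2 e^{-n(\chi^\u-\varepsilon)}$, the $s$-dimensional Hausdorff premeasure vanishes whenever $s(\chi^\u-\varepsilon)>h+\delta$, which proves $d^\u(\Gamma)\le h/(\chi^\u-\varepsilon)$.

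Dividing the two inequalities immediately gives
\[
\frac{\dim_\H(B')}{d^\u(\Gamma)}\;\ge\;\frac{h_B/(\chi^\u+\varepsilon)}{h/(\chi^\u-\varepsilon)}\;=\;\frac{h_B}{h}\cdot\frac{\chi^\u-\varepsilon}{\chi^\u+\varepsilon},
\]
which, combined with $d^\u(\Gamma)=\dim_\H(\Gamma\cap\cW^\u_{\rm loc}(x,f))$ from~\eqref{eq:stunstdim}, is the claim. The principal obstacle is the first estimate: one must calibrate $n_j$ against the \emph{lower} cylinder-diameter bound $c_1 e^{-n(\chi^\u+\varepsilon)}$ (not the upper one) so that each $V_j$ is covered by only $O(1)$ cylinders of depth $n_j$, and it is precisely this choice that is responsible for the asymmetric factor $(\chi^\u-\varepsilon)/(\chi^\u+\varepsilon)$ in the final bound. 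A secondary technicality is that the cylinders of a Markov partition are typically not open; one handles this by passing to open thickenings small enough that the associated $n_{f,\cA}$-values differ by at most a uniform additive constant, which is harmless in the exponential sums above.
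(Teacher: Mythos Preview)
Your proposal is correct and follows essentially the same route as the paper: the paper omits the proof, stating it is verbatim to~\cite[Proposition~2.2]{CamGel:16} (in turn inspired by~\cite{Man:81,BurGel:14}), and your argument is precisely this Manning-type comparison of Bowen covers against Hausdorff covers on an unstable leaf, using the uniform expansion bounds $\chi^\u\pm\varepsilon$ of the horseshoe to control cylinder diameters. The two technicalities you flag (calibrating $n_j$ against the lower diameter bound, and thickening Markov cylinders to open sets) are exactly the ones that arise in~\cite{CamGel:16}, so nothing is missing.
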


We need the following  technical result. Its proof follows ideas in~\cite[Section 5.3]{Dol:97}.

\begin{proposition}\label{pro:diment}
	Let $\mu\in\cM_{\rm erg}(f|_W)$ be hyperbolic. 
	Let $A\subset W$ be some set satisfying $\dim_\H( A)<\dim_\H\mu$.	

There exists $\varepsilon_0>0$ such that for every $\varepsilon\in(0,\varepsilon_0)$ and for every set $\Gamma=\Gamma(\varepsilon)\subset W$ which is a $(\mu,\varepsilon)$-horseshoe for $f^N$ for some positive integer $N$ we have
\[
	h(f|_\Gamma,I^+_{f|\Gamma}(A\cap\Gamma))
	= h(f|_\Gamma). 
\]	
\end{proposition}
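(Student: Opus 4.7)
The plan is to construct, for every sufficiently small $\varepsilon>0$ and for suitably large parameters, a sub-horseshoe $\Gamma'\subset\Gamma$ with $h(f|_{\Gamma'})$ arbitrarily close to $h(f|_\Gamma)$ and such that every forward orbit through $\Gamma'$ stays uniformly bounded away from $A$. Such a $\Gamma'$ lies automatically in $I^+_{f|\Gamma}(A\cap\Gamma)$, and by monotonicity (E5) of topological entropy this at once yields the claimed equality. By Lemma~\ref{lem:iterationentropy} we may assume $N=1$ and fix a homeomorphism $p\colon\Gamma\to\Sigma_M$ conjugating $f|_\Gamma$ to the full shift $\sigma$ on $M$ symbols with $\log M=h(f|_\Gamma)$; since $\Gamma$ is a $(\mu,\varepsilon)$-horseshoe, $\log M$ lies within $\varepsilon$ of $h_\mu(f)$ and the Lyapunov exponents on $\Gamma$ lie within $\varepsilon$ of $\chi^\u(\mu),\chi^\s(\mu)$.

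For $n_+\in\bN$ large, set $n_-\eqdef\lceil n_+\chi^\u(\mu)/|\chi^\s(\mu)|\rceil$ and $L\eqdef n_++n_-+1$, and consider the two-sided Markov cylinders $C_{\mathbf{i}}\eqdef p^{-1}[i_{-n_-},\ldots,i_{n_+}]$. Uniform hyperbolicity of $\Gamma$ forces each $C_{\mathbf{i}}$ to be, up to controlled distortion, an essentially isotropic Riemannian rectangle of diameter $\delta\asymp e^{-n_+\chi^\u(\mu)}$. Since $\dim_\H A<\dim_\H\mu$, for any fixed $\eta>0$ and $n_+$ large enough there is a cover of $A\cap\Gamma$ by at most $C_\eta\,e^{n_+\chi^\u(\mu)(\dim_\H A+\eta)}$ Riemannian balls of radius $\delta$, and each such ball meets $O(1)$ of the $C_{\mathbf{i}}$; hence the number $N$ of cylinders meeting $A\cap\Gamma$ satisfies the same bound. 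Comparing with the total count $M^L\approx e^{n_+\log M\,(1+\chi^\u(\mu)/|\chi^\s(\mu)|)}$ and invoking Young's formula $\dim_\H\mu=h_\mu(f)/\chi^\u(\mu)+h_\mu(f)/|\chi^\s(\mu)|$, a short algebraic check shows that $N/M^L\to0$ as $n_+\to\infty$ precisely under the hypothesis $\dim_\H A<\dim_\H\mu$, provided $\varepsilon_0$ was taken small enough in terms of $\dim_\H\mu-\dim_\H A$ to absorb the $\varepsilon$-error in $\log M$ and in the exponents.

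Declaring those $N$ cylinders forbidden, let $\Sigma'\subset\Sigma_M$ be the subshift of finite type consisting of all sequences every length-$L$ window of which avoids the forbidden set. Because the proportion $N/M^L$ of forbidden length-$L$ blocks tends to zero, the standard lower bound for the entropy of an SFT defined by forbidden blocks (transition-matrix counting) gives $h(\sigma|_{\Sigma'})\to\log M=h(f|_\Gamma)$ as $n_+\to\infty$. Set $\Gamma'\eqdef p^{-1}(\Sigma')$. By construction every iterate $f^k(x)$ of every $x\in\Gamma'$ lies in a non-forbidden cylinder, and since $A\cap\Gamma$ is contained in the closed union of the forbidden cylinders we have $\omega_f(x)\cap(A\cap\Gamma)=\emptyset$; thus $\Gamma'\subset I^+_{f|\Gamma}(A\cap\Gamma)$ and $h(f|_\Gamma,I^+_{f|\Gamma}(A\cap\Gamma))\ge h(f|_\Gamma,\Gamma')\to h(f|_\Gamma)$, which together with monotonicity yields the equality.

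The delicate point, and the only place where the full strength of $\dim_\H A<\dim_\H\mu$ is used, is the counting comparison above. A naive \emph{balanced} refinement of symmetric depth $n$ produces anisotropic cells of dimensions $e^{-n\chi^\u}\times e^{-n|\chi^\s|}$; each Euclidean $\delta$-ball with $\delta=e^{-n\min(\chi^\u,|\chi^\s|)}$ then meets $e^{n|\chi^\u-|\chi^\s||}$ cells, and the resulting threshold on $\dim_\H A$ falls strictly below $\dim_\H\mu$ whenever $\chi^\u(\mu)\ne|\chi^\s(\mu)|$. Choosing the forward and backward depths in the proportion $|\chi^\s(\mu)|\colon\chi^\u(\mu)$ makes the cells essentially isotropic, which is precisely what aligns the counting bound with Young's formula and delivers the sharp threshold $\dim_\H\mu$.
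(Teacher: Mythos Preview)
Your overall strategy---balancing the forward and backward depths in the ratio $|\chi^\s(\mu)|:\chi^\u(\mu)$ so that the two-sided cylinders become essentially round, and then forbidding the cylinders that meet $A$---is exactly the geometric idea behind the paper's proof. But there is a genuine gap at the counting step. You assert that because $\dim_\H A<\dim_\H\mu$, for every $\eta>0$ and $n_+$ large one can cover $A\cap\Gamma$ by at most $C_\eta\,e^{n_+\chi^\u(\mu)(\dim_\H A+\eta)}$ balls of the \emph{fixed} radius $\delta\asymp e^{-n_+\chi^\u(\mu)}$. That is a bound on the upper \emph{box} (Minkowski) dimension of $A$, not on its Hausdorff dimension. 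A small Hausdorff dimension guarantees only the existence of efficient covers by balls of \emph{varying} radii, i.e.\ $\sum_i r_i^\theta$ small; there are sets with Hausdorff dimension $0$ and box dimension arbitrarily close to the ambient dimension. Since the rest of your construction (forbidding a collection of length-$L$ blocks and invoking an SFT entropy estimate) relies essentially on this single-scale count, the argument as written does not prove the proposition under the stated hypothesis on $\dim_\H A$.

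The paper confronts the same difficulty and resolves it by working with covers of variable scale. It builds, for each $r$, a Moran cover $\cC(r)$ by two-sided cylinders whose stable and unstable depths are balanced (exactly your isotropy idea), takes a genuine Hausdorff cover $\{R_i\}$ of $A$ by rectangles with $R_i\in\cR(r_i)$ for \emph{different} $r_i$'s satisfying $\sum_i r_i^\theta\le1$, and then uses Young's formula to convert this into $\sum_i e^{-s(n_i^-+n_i^+)}\le1$ for some $s<h(\sigma)$. The key input is then Proposition~\ref{pro:Dol} (Dolgopyat), which shows that forbidding a family of one-sided cylinders of \emph{variable} lengths $|U_i|$ with $\sum_i e^{-s|U_i|}<1$ leaves a set whose entropy is close to $h(\sigma)$ once the minimal length $n_0$ is large. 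Your single-scale SFT argument is the special case where all $|U_i|$ are equal; it is precisely the multi-scale version that lets the Hausdorff hypothesis (rather than a box-dimension hypothesis) suffice.
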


\begin{proof}
Let $\chi^\mp\eqdef\chi^{\s/\u}(\mu)$.

Note that by Young's formula~\eqref{eq:Young} $\dim_\H \mu>0$ is equivalent to $h_\mu(f)>0$.
Choose $\theta$ such that $\dim_\H( A)<\theta<\dim_\H\mu$. Let
\begin{equation}\label{eq:choosee}
	\delta
	\eqdef 1- \frac{\theta}{\dim_\H\mu} 
\end{equation}
and fix some $\varepsilon_0>0$ small enough such that we have
\begin{equation}\label{eq:chooseee}
	\frac{2\varepsilon_0}{\lvert\chi^-\rvert}<\frac\delta4
	\quad\text{ and }\quad
	\frac{2\varepsilon_0}{h_\mu(f)/2}<\frac\delta8.
\end{equation}

Given now $\varepsilon>0$ such that $\varepsilon<\min\{\varepsilon_0,h_\mu(f)/2\}$, let $\Gamma(\varepsilon)\subset W$ be a $(\chi^-,\chi^+,\varepsilon)$-horseshoe as in Lemma~\ref{lemaqeaprox1} (with respect to $f^N$ for some positive integer $N$). 

Without loss of generality, invoking  property (E4) and Lemma~\ref{lem:iterationentropy}, for the rest of the proof we can assume that $N=1$.

Consider the functions $\psi^-,\psi^+\colon\Gamma\to(-\infty,0)$
\[
	\psi^-(x)\eqdef \log\,\lVert df|_{E^\s_x}\rVert,\quad
	\psi^+(x)\eqdef - \log\,\lVert df|_{E^\u_x}\rVert.
\]	
Because of continuity of  $\psi^-$ and $\psi^+$  and compactness of $\Gamma$, there is a positive constant $C_0$ such that
\begin{equation}\label{eq:boundsss}
	-C_0\le \min\{ \psi^-,  \psi^+\}.
\end{equation}

Observe that by hyperbolicity of $f|_\Gamma$ and the properties of a $(\chi^-,\chi^+,\varepsilon)$-horseshoe, there exists $N_0=N_0(\varepsilon)\ge1$ such that for every $x\in\Gamma$ and for every $n\ge N_0$ we have
\begin{equation}\label{eq:limsupunif}
	\lvert\frac1nS_{-n}\psi^-(x)-\chi^-\rvert<2\varepsilon,\quad
	\lvert\frac1nS_n\psi^+(x)+\chi^+\rvert<2\varepsilon,
\end{equation}
where $S_{-n}\phi=\phi\circ f^{-1}+\phi\circ f^{-2}+\ldots+\phi\circ f^{-n}$ and $S_n\phi=\phi+\phi\circ f+\ldots+\phi\circ f^{n-1}$.

	Let $R_1,\ldots,R_M$ be a Markov partition of $\Gamma$ (with respect to $f$) and recall that $f|_\Gamma$ is topologically conjugate to $\sigma|_{\Sigma_M}$ for some $M\ge1$ by means of some homeomorphism $\pi\colon\Sigma_M\to\Gamma$, $\pi\circ\sigma= f\circ\pi$.

Given $r\in(0,1)$, we construct a Moran cover of pairwise disjoint cylinders of (up to some distortion correction factor) approximately size $r$. First, we consider the potential function $\psi^+$. For every $\xi\in\Sigma_M$ let $n=n(\xi)\ge1$ be the smallest positive integer such that
\[
	S_n\psi^+(\pi(\xi))
	< \log r.
\]

Note that~\eqref{eq:boundsss} implies
\[
	S_n\psi^+(\pi(\xi))
	<  \log r
	\le	S_n\psi^+(\pi(\xi)) + C_0
\]

Since $\psi^+$ is uniformly bounded and negative, there exist positive integers $N_1\le N_2$ depending only on $r$ such that for every $\xi\in\Sigma_M$ we have $N_1\le n(\xi)\le N_2$. 
	We now construct a partition of the associated space of one-sided sequences $\Sigma_M^+=\{1,\ldots,M\}^{\bN_0}$ recursively: Start by setting $m=0$, $S=\Sigma_M^+$, $\cC^+=\emptyset$, and $k=N_1$, and 
\begin{itemize}
\item let $\ell_k$ be the number of (disjoint) cylinders $[\eta_1^i\ldots \eta_k^i]$, $i=1,\ldots,\ell_k$, which contain a sequence $\eta^i\in\Sigma_M^+$ with $n(\eta^i)=k$;
\item replace $\cC^+$ by $\cC^+\cup\bigcup_{i=1}^{\ell_k}[\eta_1^i\ldots\eta_k^i]$, and replace $S$ by $S\setminus\{[\eta_1^i\ldots\eta_k^i]\colon i=1,\ldots,\ell_k\}$;
\item if $k=N_2$ or $S=\emptyset$ then stop the recursion. Otherwise,  repeat the recursion replacing $k$ by $k+1$. 
\end{itemize}
Since $n(\cdot)\le N_2$, the recursion eventually stops with $\cC^+=\Sigma_M^+$. The thus obtained family $\cC^+$ provides a partition of $\Sigma_M^+$ which has the following properties: 
\begin{itemize}
\item It is a family of (pairwise disjoint) cylinders which each are of level between $N_1$ and $N_2$.
\item Each cylinder of level $k$ contains a sequence $\xi\in\Sigma_M^+$ with $n(\xi)=k$ and any sequence $\eta\in[\xi_1\ldots\xi_k]$ satisfies $n(\eta)\in\{k,\ldots,N_2\}$.
\end{itemize}
We call $\cC^+(r)$ a \emph{Moran cover} of $\Sigma_M^+$ of parameter $r$ (relative to the function $\psi^+$).
We will below also keep track of the corresponding positive integer $N_1$ which we hence denote by $N_1^+(r)$.

Now we consider the potential function $\psi^-$. For every $\xi\in\Sigma_M$ let $n=n(\xi)\ge1$ be the smallest positive integer such that
\[
	S_{-n}\psi^-(\pi(\xi))
	< \log r.
\]
Note that~\eqref{eq:boundsss} implies
\begin{equation}\label{eq:bounds-s}
	S_{-n}\psi^-(\pi(\xi))
	<  \log r
	\le	S_{-n}\psi^-(\pi(\xi)) + C_0
\end{equation}
We construct analogously  $\cC^-(r)$ a \emph{Moran cover} of the space of one-sided sequences $\Sigma_M^-=\{1,\ldots,M\}^{-\bN}$ of parameter $r$ (relative to the function $\psi^-$) and denote by $N_1^-(r)$ the correspondingly defined positive integer. Concatenating all such cylinders, let
\[
	\cC(r)
	\eqdef\{[\eta_{-n}\ldots\eta_{-1}.\eta_0\ldots\eta_{m-1}]\colon [\eta_{-n}\ldots\eta_{-1}]\in\cC^-(r),[\eta_0\ldots\eta_{m-1}]\in\cC^+(r)\}.
\]
Given $C\in\cC(r)$ denote $\lvert C\rvert\eqdef r$.
Given $\rho>0$ put
\[	
	\cC_\rho
	\eqdef \bigcup_{r\in(0,\rho)}\cC(r).
\]
Observe that $N_1^+(r)$ and $N_1^-(r)$ diverge when $r\to0$.

The conjugation map $\pi\colon\Sigma_M\to\Gamma$ sends each cylinder $C=[\eta_{-n}\ldots \eta_{m-1}]\in\cC_\rho$ into a Markov rectangle 
\[
	R_{\eta_{-n}\ldots \eta_{m-1}}
	\eqdef \pi([\eta_{-n}\ldots\eta_{m-1}]),
\]	 
which has roughly (up to some constant which is universal on $\Gamma$ and which depends on the geometry of stable/unstable manifolds and on distortion estimates) lengths given by $r$ in the stable and the unstable directions, respectively. Denote
\[
	\cR_\rho
	\eqdef \bigcup_{r\in(0,\delta)}\cR(r),
	\quad\text{ where }\quad
	\cR(r)=\{\pi(C)\colon C\in\cC(r)\} 
\]
and for $R\in\cR(r)$ we wite $\lvert R\rvert\eqdef r$.

Consider the family $\cF=\cR_\rho$ and define the Hausdorff measure and dimension (with respect to $\cF$) (see Section~\ref{sec:dis}). By hyperbolicity of $\Gamma$, this family indeed satisfies the properties (HD1)--(HD3). Hence, given $\theta>\dim_\H( A)$ there exists $\rho\in(0,1)$ and a countable cover $\{R_i\}_i$ of $A$ by  rectangles $R_i\in\cR_\rho$  such that
\[
	N_1^-(\rho)\ge N_0,\quad 
	N_1^+(\rho)\ge N_0,\quad 
	\sum_i\lvert R_i\rvert^\theta \le1.
\]
To fix notation, note that every $R_i$ is in $\cC(r_i)$ for some $r_i\in(0,\rho)$ and is defined by means of some corresponding finite sequence $(\eta^i_{-n_i^-}\ldots\eta^i_{-1}.\eta^i_0\ldots\eta^i_{n_i^+-1})\in\cC(r_i)$. 
By construction of the Moran cover of parameter $r_i$ and by~\eqref{eq:limsupunif}, for every $R_i$ we have ($\eta^i$ to be taken some arbitrary infinite sequence in the cylinder $[\eta^i_{-n_i^-}\ldots\eta^i_{-1}.\eta^i_0\ldots\eta^i_{n_i^+-1}]$)
\[
	\log\lvert R_i\rvert
	> {S_{n_i^+}\psi^+(\pi(\eta^i))}
	> {-(\chi^++2\varepsilon)n_i^+}.
\]
Thus, we can estimate
\begin{equation}\label{eq:coverent}
	\sum_ie^{-(\chi^++2\varepsilon)\theta n_i^+}
	< 1.
\end{equation}	

Note that~\eqref{eq:boundsss},~\eqref{eq:bounds-s}, and~\eqref{eq:limsupunif} together imply
\[\begin{split}
	n_i^- (\chi^- -2\varepsilon )
	&< S_{-n_i^-}\psi^-(\pi(\eta^i))
	< \log r
	\le S_{n_i^+}\psi^+(\pi(\eta^i)) + C_0
	<-n_i^+(\chi^+-2\varepsilon) + C_0
\end{split}\]
which implies
\begin{equation}\label{eq:onemore}
	- n_i^+\frac{\chi^+}{\chi^-}
	\leq n_i^- - \frac{2\varepsilon(n_i^- + n_i ^+) + C_0}{\chi^-}.
\end{equation}
Further, with Young's formula~\eqref{eq:Young} 
$$
	\dim_\H  \mu 
	= h_\mu(f) \left(\frac{1}{\chi^+} - \frac{1}{\chi^-}\right)
$$
with~\eqref{eq:onemore} we obtain
\[\begin{split}
	\theta n_i^+\chi^+ 
	&= h_\mu(f)\frac{\theta}{\dim_\H\mu}\left(n_i^+-n_i^+\frac{\chi^+}{\chi^-}\right) \\
	&\le h_\mu(f)\frac{\theta}{\dim_\H\mu}\left(n_i^+
		+n_i^- - \frac{2\varepsilon(n_i^- + n_i ^+) + C_0}{\chi^-}\right)\\
	&= 	h_\mu(f)(n_i^+ + n_i^-)\frac{\theta}{\dim_\H\mu}
		\left(1  + \frac{2\varepsilon}{\lvert\chi^-\rvert} 
			+ \frac{C_0}{\lvert\chi^-\rvert(n_i^+ + n_i^-)}\right).
\end{split}\]
By our hypotheses~\eqref{eq:choosee} and~\eqref{eq:chooseee} on $\varepsilon$ and $\delta$ we can conclude that 
\begin{equation}\label{eq:epsdel}
	\frac{\theta}{\dim_\H\mu}\left(1  + \frac{2\varepsilon}{\lvert\chi^-\rvert}
				 +\frac{C_0}{\lvert\chi^-\rvert(n_i^+ + n_i^-)}\right)
	< 1-\frac{\delta}{4}.
\end{equation}
Hence with $h_\mu(f)\le h(\sigma|_{\Sigma_M})$ and with~\eqref{eq:epsdel} and~\eqref{eq:choosee} we can conclude 
\[\begin{split}
	\theta n_i^+\chi^+ +2\varepsilon n_i^+
	&< (n_i^++n_i^-)h(\sigma|_{\Sigma_M})
		\left[1-\frac\delta4 +\frac{2\varepsilon n_i^+}{(n_i^++n_i^-)h(\sigma|_{\Sigma_M})}\right]	\\
	&< (n_i^++n_i^-)h(\sigma|_{\Sigma_M})
		\left[1-\frac\delta4 +\frac{2\varepsilon }{h(\sigma|_{\Sigma_M})}\right]	\\
\text{with }\eqref{eq:chooseee}\quad
	&< (n_i^++n_i^-)h(\sigma|_{\Sigma_M})
		\left[1-\frac\delta8\right]			
\end{split}\]
Consider now the family of cylinders of length $\lvert U_i\rvert=n_i^-+n_i^+$ given by
\[
	\cU=\{U_i\colon U_i=\sigma^{-n_i^-}(C_i)\}
	\subset \bigcup_{n\ge N_0}\Sigma_{M,n}^+.
\]
With~\eqref{eq:coverent} and the above estimates, this family satisfies
\[
	\sum_ie^{-s\lvert U_i\rvert}\le 1
	\quad\text{ for some  }
	s <  h(\sigma|_{\Sigma_M}).
\]
Hence, by Proposition~\ref{pro:Dol} we have $h(\sigma|_{\Sigma_M},I^+(\cU))\ge H(N_0)$ for some function $H$ satisfying $\lim_{n\to\infty}H(n)= h(\sigma|_{\Sigma_M})$. Note that 
\[
	I^+(\cU)\subset \pi(I^+_{g|\Gamma}(A\cap\Gamma)).
\]
Hence, monotonicity of entropy (E5) implies $h(g|_\Gamma,I^+_{g|\Gamma}(A\cap\Gamma))\ge H(N_0)$. 
When letting $N_0\to\infty$ we obtain $h(g|_\Gamma,I^+_{g|\Gamma}(A\cap\Gamma))=h(g|_\Gamma)$.

One verifies that $I^+_{g|\Gamma}(A\cap\Gamma)=I^+_{f|\Gamma}(A\cap\Gamma)$ and by (E4) hence $h(f|_\Gamma,I^+_{f|\Gamma}(A\cap\Gamma))=h(f|_\Gamma)$. This proves the proposition.
\end{proof}

\section{Proofs}\label{sec:final}

\begin{proof}[Proof of Theorem~\ref{the:smallsmall}]
	Let $\mu$ be a hyperbolic ergodic measure and $W$ its support which by hypothesis is locally maximal. Let $A\subset W$ some set with $\dim_\H( A)<\dim_\H\mu$.  
	
	Given $\varepsilon$, let $W_\varepsilon\subset W$ be a $(\mu,\varepsilon)$-horseshoe as provided by Lemma~\ref{lemaqeaprox1}. Since $W$ is  locally maximal, for $\varepsilon$ small we can assume $W_\varepsilon\subset W$. We have $d^\u(W_\varepsilon)+d^\s(W_\varepsilon)
	= \dim_\H( W_\varepsilon)$ and  by Lemma~\ref{lemaqeaprox1} 
\begin{equation}\label{criaria}
	h(f|_{W_\varepsilon})
	\ge h_\mu(f)-\varepsilon
	\quad\text{ and }\quad
	d^\star(W_\varepsilon)
	\ge \frac{h_\mu(f)}{\lvert\chi^\star(\mu)\rvert}-\delta(\varepsilon)
\end{equation}
for $\star=s,u$ respectively, where $\delta(\varepsilon)\to0$ as $\varepsilon\to0$.

By Proposition~\ref{pro:diment} and by Proposition~\ref{pro:Manning} applied to the set $B=I^+_{f|W_\varepsilon}(A\cap W_\varepsilon)$, for every $x\in W_\varepsilon$ we have
\begin{equation}\label{criariau}
	 h(f|_{W_\varepsilon})
	 = h\big(f|_{W_\varepsilon},I^+_{f|W_\varepsilon}(A\cap W_\varepsilon)\big)
	 = h\big(f|_{W_\varepsilon},I^+_{f|W_\varepsilon}(A\cap W_\varepsilon)\cap\cW^\u_{\rm loc}(x,f)\big).
\end{equation}
Hence, by Proposition~\ref{pro:localdim} applied to $I^+_{f|W_\varepsilon}(A\cap W_\varepsilon)$  we have
\begin{equation}\label{eq:1}
	\dim_\H \big(I^+_{f|W_\varepsilon}(A\cap W_\varepsilon)\cap\cW^\u_{\rm loc}(x,f) \big)
	\ge \frac{\chi^\u(\mu)-\varepsilon}{\chi^\u(\mu)+\varepsilon}
		\dim_\H \big(W_\varepsilon\cap\cW^\u_{\rm loc}(x,f)\big).
\end{equation}
Recall that by~\eqref{eq:stunstdim} we have
\begin{equation}\label{eq:2}
	\dim_\H \big(W_\varepsilon\cap\cW^\u_{\rm loc}(x,f)\big)
	= d^\u(W_\varepsilon).
\end{equation}
By Lemma~\ref{lem:locally}, for every $y\in I^+_{f|W_\varepsilon}(A\cap W_\varepsilon)\cap\cW^\u_{\rm loc}(x,f)$ we have $I^+_{f|W_\varepsilon}(A\cap W_\varepsilon)\supset W_\varepsilon\cap\cW^\s_{\rm loc}(y,f)$, and the Hausdorff dimension of the latter is equal to $d^\s(W_\varepsilon)$ for every such $y$ (recall~\eqref{eq:localdimension}). 

As we consider a basic set of a surface diffeomorphism, the holonomy maps between stable (unstable) local manifolds are Lipschitz continuous. So locally and up to a Lipschitz continuous change of coordinates,  $W_\varepsilon$ is a direct product of slices taken with a local unstable and a local stable manifold, respectively  (see~\cite{PalVia:88} for details).  By~\cite[Theorem 1]{McCMan:83}, the Hausdorff dimension of such slices of $W_\varepsilon$ does not depend on the choice of manifolds (formulas~\eqref{eq:stunstdim}).
Now we apply Lemma~\ref{lem:Marstrand} to $B_1=I^+_{f|W_\varepsilon}(A\cap W_\varepsilon)\cap\cW^\u_{\rm loc}(x,f)$ taking $b_1$ hence provided by~\eqref{eq:1} together with~\eqref{eq:2}. To apply this lemma, we also take $B_2$ to be arcs in the local stable manifolds and take $b_2=d^\s(W_\varepsilon)$. By the fact that the exceptional set is $\s$-saturated and by the fact that any intersection of $W_\varepsilon$ with a local stable manifold by~\eqref{eq:stunstdim} has constant dimension $b_2\eqdef d^\s(W_\varepsilon)$, we obtain
\[
	\dim_\H( I^+_{f|W_\varepsilon}(A\cap W_\varepsilon))
	\ge d^\s(W_\varepsilon) 
	+  \frac{\chi^\u(\mu)-\varepsilon}{\chi^\u(\mu)+\varepsilon}d^\u(W_\varepsilon).
\]
Observe that $\dim_\H( I^+_{f|W}(A))\ge \dim_\H( I^+_{f|W_\varepsilon}(A\cap W_\varepsilon))$. As $\varepsilon$ was arbitrary, with~\eqref{criaria} and~\eqref{criariau} we conclude
\[
	h(f|_W,I^+_{f|W}(A))
	\ge h_\mu(f)
	\quad\text{ and }\quad
	\dim_\H( I^+_{f|W}(A))
	\ge \dim_\H\mu,
\].

Finally, to obtain the estimates for the (possibly smaller subset) $E^+_{f|W}(A)$, observe that by Remark \ref{remarkIE} we have
\[
	\dim_\H  \mu  
	\leq \dim_\H( I^+_{f|W}(A) )
	= \max_{n\ge 0}\Big\{ \dim_\H( E^+_{f|W}(A)), 
	\dim_\H \big(f^{-n}(\tilde{A})\big)\Big\},
\] 
where $\tilde A\subset A$ was defined in Remark \ref{remarkIE}. 
Since $f$ is bi-Lipschitz, by property (H3) we have $\dim_{\rm  H} f^{-n}(\tilde{A}) = \dim_\H ( \tilde{A})$ for every $n\ge0$. Since hence $\dim_\H\big(  f^{-n}\big(\tilde{A}\big)\big)= \dim_\H\big(  \tilde{A}\big)\leq \dim_\H(  A) < \dim_\H \mu$ 
and since we already proved $\dim_{\text{H}}(I^+_{f|W}(A))\geq \dim_\H \mu$, 
this implies 
\[
	\dim_\H( E^+_{f|W}(A)) \geq \dim_\H \mu.
\]	
This finishes the proof of the theorem.
\end{proof}

\begin{proof}[Proof of Theorem~\ref{the:basic}]
We can proceed exactly as in the proof of Theorem~\ref{the:smallsmall}, considering $(\mu,\varepsilon)$-horseshoes $W_\varepsilon\subset\Gamma$.  

The only difference is the application of the slicing argument. By Lemma~\ref{lem:locally}, for every $y\in I^+_{f|W_\varepsilon}(A\cap W_\varepsilon)\cap\cW^\u_{\rm loc}(x,f)$ we have $I^+_{f|\Gamma}(A)\supset\Gamma\cap\cW^\s_{\rm loc}(y,f)$, and the Hausdorff dimension of the latter is equal to $d^\s(\Gamma)$ for every such $y$ (recall~\eqref{eq:localdimension}). Then as before we can consider the local product structure of $\Gamma$ and by Lemma~\ref{lem:Marstrand} with $b_1=d^\s(W_\varepsilon)$ we can conclude that
\[
	\dim_\H( I^+_{f|\Gamma}(A))
	\ge d^\s(\Gamma)+\frac{\chi^\u(\mu)-\varepsilon}{\chi^\u(\mu)+\varepsilon}d^\u(W_\varepsilon).
\]
As $\varepsilon$ was arbitrary, with~\eqref{criaria} we conclude
\[
	\dim_\H( I^+_{f|\Gamma}(A))
	\ge d^\s(\Gamma) + \frac{h_\mu(f)}{\chi^\u(\mu)},
\]
which finishes the proof.
\end{proof}

\begin{proof}[Proof of  Theorem \ref{main}]
Consider the sequences $(\mu_n)_n$, $(\varepsilon_n)_n$ and $(\Gamma_n)_n$ provided by Lemma \ref{lemaqeaprox} such that, in particular $\lim_n\dim_\H\mu_n=\DD(f|_W)$. 
By  hypothesis we have 
\[
	\dim_\H (A)<\DD(f|_W).
\]
Hence, for $n$ sufficiently large we have (the first inequality is simple)
\[
	\dim_\H (A\cap \Gamma_n)
	\le \dim_\H (A)
	< \dim_\H\mu_n
	\le \DD(f|_{\Gamma_n})
	\le \DD(f|_W).
\]
From Theorem~\ref{the:smallsmall} we obtain $\dim_\H( I^+_{f|W}(A))\ge\dim_\H\mu_n$.
Now letting $n\to\infty$ implies
\[
	\dim_\H( I^+_{f|W}(A) )
	\ge \DD(f|_W)
\]
as claimed.

To estimate the dimension of $E^+_{f|W}(A)$,  by Remark \ref{remarkIE} 
we can conclude
\[
	\DD(f|_W) 
	\leq \dim_\H( I^+_{f|W}(A) )
	= \max_{n\ge 0}\Big\{ \dim_\H (E^+_{f|W}(A)), 
	\dim_\H \big(f^{-n}(\tilde{A})\big)\Big\}
\] 
Since $f$ is bi-Lipschitz, by (H3) we  have 
$\dim_\H ( f^{-n}(\tilde{A})) = \dim_\H ( \tilde{A})$. 
This implies that for every $n\ge0$ we have
$\dim_\H( f^{-n}\big(\tilde{A}\big))= \dim_\H ( \tilde{A})\leq \dim_\H(  A) < \DD (f|_W)$. Together with  $\dim_{\text{H}}I^+_{f|W(A)}\geq \DD (f|_W)$ this implies $\dim_\H( E^+_{f|W}(A)) \geq \DD(f|_W)$.
\end{proof}

\bibliographystyle{novostyle}

\begin{thebibliography}{MM}
\bibitem[AN1]{AN:interval} A.~G.~Abercrombie and R.~Nair,
   \emph{An exceptional set in the ergodic theory of Markov maps of the interval}, Proc. London Math. Soc. (3), \textbf{75} (1997), 221--240.
\bibitem[AN]{AbeNai:97} A.~G.~Abercrombie and R.~Nair,  \emph{An exceptional set in the ergodic theory of rational maps of the  Riemann sphere},  Ergodic Theory Dynam. Systems \textbf{17} (1997), 253--267.
\bibitem[AL]{AviLyu:} A.~Avila  and M.~Lyubich, \emph{Lebesgue measure of Feigenbaum Julia sets}, preprint ({\tt arXiv:1504.02986}).
\bibitem[BW]{BarWol:03} L.~Barreira and Ch.~Wolf, \emph{Measures of maximal dimension for hyperbolic diffeomorphisms}, Comm. Math. Phys. \textbf{239} (2003), 93--113.
\bibitem[BP]{BisPer:16} Ch.~J.~Bishop and Y.~Peres, \emph{Fractals in Probability and Analysis}, Cambridge Studies in Advanced Mathematics \textbf{162}, 2016.
\bibitem[B${}_1$]{Bow:73} R.~Bowen, \emph{Topological entropy for noncompact sets}, Trans. Amer. Math. Soc.  {\bf 184} (1973), 125--136. 
\bibitem[B${}_2$]{Bow:75} R.~Bowen, \emph{Equilibrium States and the Ergodic Theory of Anosov Diffeomorphisms}, Lect. Notes Math. \textbf{470}, Springer, Berlin Heidelberg New York, 1975.
\bibitem[BG]{BurGel:14} K. Burns and K. Gelfert, \emph{Lyapunov spectrum for geodesic flows of rank 1 surfaces}, 
   Discrete Contin. Dyn. Syst. {\bf 34} (2014), 1841--1872.
\bibitem[CG]{CamGel:16} S.~Campos and K.~Gelfert, \emph{Exceptional sets for nonuniformly expanding maps}, Nonlinearity \textbf{29} (2016), 1238--1256. 
\bibitem[Da${}_1$]{Dan:88} S.~G.~Dani, \emph{On orbits of endomorphisms of tori and the Schmidt game}, Ergodic Theory Dynam. Systems \textbf{8} (1988), 523--529.
\bibitem[Da${}_2$]{Dan:89} S.~G.~Dani, \emph{On badly approximable numbers, Schmidt games and bounded orbits of flows}. In: Number theory and dynamical systems (York, 1987), 69--86, London Math. Soc. Lecture Note Ser., 134, Cambridge Univ. Press, Cambridge, 1989. 
\bibitem[Do]{Dol:97} D.~Dolgopyat, \emph{Bounded orbits of Anosov flows}, Duke Math. J. {\bf 87} (1997),  87--114.  
\bibitem[F${}_2$]{Fal:85} K.~Falconer, \emph{The Geometry of Fractal Sets}, Cambridge Tracts in Mathematics \textbf{85}, Cambridge University Press, 1985.
\bibitem[F${}_1$]{Fal:03} K.~Falconer, \emph{Fractal Geometry}, John Wiley \& Sons Ltd., 2003.
\bibitem[Fr]{Fra:77} J.~Franks, \emph{Invariant sets of hyperbolic toral automorphisms}, Am. J. Math. \textbf{99} (1977), 1089--95.
\bibitem[G]{Gel:16} K.~Gelfert, \emph{Horseshoes for diffeormorphisms preserving hyperbolic measures}, Math. Z. \textbf{282} (2016), 685--701. 
\bibitem[Ha]{Han:78} S.~G.~Hancock, \emph{Construction of invariant sets for Anosov diffeomorphisms}, J. London Math. Soc. (2) \textbf{18} (1978),  339--348.
\bibitem[Hi]{Hir:70} M.~Hirsch, \emph{On invariantsets of hyperbolic sets}, in: Essays on Topology and Related Topics, Mem. d\'edi\'es \`a George de Rham (Haefliger and R. Narasimhan, Eds.), Springer, Berlin, 1970.
\bibitem[J]{Jar:29} V.~Jarnik, \emph{Diophantischen Approximationen und Hausdorffsches Ma{\ss}}, Mat. Sb. {\bf 36} (1929), 371--382.
\bibitem[KH]{KatHas:95} A. Katok and B. Hasselblatt, \emph{Introduction to the Modern Theory of Dynamical Systems}, Encyclopedia of Mathematics and Its Applications \textbf{54}, Cambridge University Press, Cambridge, 1995.
\bibitem[KW]{KleWei:96} D.~Y.~Kleinbock and G.~A.~Margulis, \emph{Bounded orbits of non-quasiunipotent flows on homogeneous spaces}, Amer. Math. Soc. Transl. \textbf{171} (1996), 141--172.
\bibitem[LY]{LedYou:85} F.~Ledrappier and L.-S.~Young, \emph{The metric entropy of diffeomorphisms: Part I: Characterization of measures satisfying Pesin's entropy formula}, Ann. Math., 2nd Ser. \textbf{122} (1985), 509--539.
\bibitem[MM]{McCMan:83} H.~McCluskey and A.~Manning, \emph{Hausdorff dimension for horseshoes}, Ergodic Theory Dynam. Systems \textbf{3} (1983), 251--260.
\bibitem[M${}_1$]{Man:79} R.~Ma\~n\'e, \emph{Orbits of paths under hyperbolic toral automorphisms}, Proc. Amer. Math. Soc. \textbf{73} (1979), 121--125.
\bibitem[M${}_2$]{Man:81} A.~Manning, \emph{A relation between Lyapunov exponents, Hausdorff dimension and entropy},    Ergodic Theory Dynam. Systems {\bf 1} (1981), 451--459.
\bibitem[PV]{PalVia:88} J.~Palis and M.~Viana, \emph{On the continuity of Hausdorff dimension and limit capacity for horseshoes}, Dynamical Systems (Valparaiso, 1986) (Lecture Notes in Mathematics, 1331). Eds. R. Bam\'on, R. Labarca and J. Palis. Springer, Berlin, 1988, pp. 150--160.
\bibitem[P${}_1$]{Pes:77} Ya.~Pesin, \emph{Characteristic Lyapunov exponents and smooth ergodic theory}, Russ. Math. Surveys \textbf{32} (1977), 55--114.
\bibitem[P${}_2$]{Pes:97} Ya.~Pesin, \emph{Dimension Theory in Dynamical Systems. Contemporary Views and Applications}, Chicago Lectures in Mathematics, University of Chicago Press, Chicago, IL, 1997.
\bibitem[PU]{PrzUrb:10} F.~Przytycki and M.~Urba\'nski, \emph{Conformal Fractals: Ergodic Theory Methods}, London
Mathematical Society Lecture Note Series vol. \textbf{371}, Cambridge: Cambridge University Press (2010).
\bibitem[R]{Ram:05} M.~Rams, \emph{Measures of maximal dimension}, Real Analysis Exchange \textbf{31(1)} (2005), 55--62.
\bibitem[S]{Sch:66} W.~M.~Schmidt, \emph{Trans. Amer. Math. Soc.} \textbf{123} (1966), 178--199.
\bibitem[Ta]{Tak:88} F.~Takens, \emph{Limit capacity and Hausdorff dimension of dynamically defined Cantor sets}. In: \emph{Dynamical Systems (Valparaiso, 1986)}, Lecture Notes in Mathematics, 1331. Eds. R. Bam\'on and R. Labarca. Springer, Berlin, 1988, pp. 196--212.
\bibitem[Ts]{Tse:09} J.~Tseng, \emph{Schmidt games and Markov partitions}, Nonlinearity {\bf 22} (2009), 525--543.
\bibitem[U${}_1$]{Urb:91} M.~Urba\'nski, \emph{The Hausdorff dimension of the set of points with nondense orbit under a hyperbolic dynamical system}, Nonlinearity \textbf{4} (1991), 385--97.
\bibitem[U${}_2$]{Urb:03} M.~Urba\'nski, \emph{Measures and dimensions in conformal dynamics}, Bull. Am. Math. Soc. \textbf{40} (2003), 281--321.
\bibitem[Wa]{Wal:81} P.~Walters, \emph{An Introduction to Ergodic Theory},  Grad. Texts in Math. {\bf 79}, Springer, 1981.
\bibitem[Wo]{Wol:06} Ch.~Wolf, \emph{Generalized physical and SRB measures for hyperbolic diffeomorphisms}, J. Stat. Phys. \textbf{122} (2006), 1111--1138.
\bibitem[Wu]{Wu:16} W.~Wu, \emph{Schmidt games and non-dense forward orbits of certain partially hyperbolic systems}, Ergodic Theory Dynam. Systems \textbf{36} (2016), 1656--1678.
\bibitem[Y${}_1$]{You:82} L.~S.~Young, \emph{Dimension, entropy and Lyapunov exponents}, Ergodic Theory Dynam. Systems \textbf{2} (1982), 109--124.
\bibitem[Y${}_2$]{You:02} L.~S.~Young, \emph{What are SRB measures, and which dynamical systems have them?}, J. Stat. Phys. \textbf{108} (2002), 733--754.
\end{thebibliography}

\end{document}